\documentclass{article}
\usepackage[noadjust,nocompress]{cite}
\usepackage{amssymb}
\usepackage{verbatim}
\usepackage{amsmath,stmaryrd}
\usepackage{amsthm,mathtools}
\usepackage{abstract}
\usepackage{tikz,tikz-cd}
\usepackage{indentfirst}
\usepackage{chngcntr}
\usepackage{hyperref}
\usepackage{pifont}
\usepackage{geometry}
\usepackage{setspace,xparse}

\numberwithin{equation}{section}
\newcommand{\rmnum}[1]{\romannumeral#1}
\newcommand{\Rmnum}[1]{\uppercase\expandafter{\romannumeral#1}}
\newtheoremstyle{hjx}
{6pt}
{6pt}
{\itshape}
{}
{\bfseries}
{}
{1em}
{\thmname{#1}\,\,\thmnumber{#2}\,\,\thmnote{#3}}
\theoremstyle{hjx}
\newtheorem{theorem}{\textbf{Theorem}}[section]
\newtheorem{proposition}{\textbf{Proposition}}[section]
\newtheorem{lemma}{\textbf{Lemma}}[section]
\linespread{1.3}
\allowdisplaybreaks[4]
\geometry{a4paper,left=2.5cm,right=2.5cm,top=3.53cm,bottom=3.53cm}
\newtheoremstyle{hjxx}
{10pt}
{10pt}
{}
{}
{\bfseries}
{}
{1em}
{\thmname{#1}\,\,\thmnumber{#2}\,\,\thmnote{#3}}
\theoremstyle{hjxx}

\numberwithin{equation}{section}
\NewDocumentCommand{\llrrlr}{O{1}O{n}}{
	\settowidth{\hangindent}{2em}
	{#1}\hangafter=1
	{#2}
}
\NewDocumentCommand{\llrrind}{O{0pt}O{m}O{m}}{%
  \settowidth{\labelwidth}{#2}
  \addtolength{\labelwidth}{#1}
  \noindent
  \makebox[\labelwidth][l]{#2}
  \hangindent=\labelwidth
  \hangafter=1
  #3\par
}
\begin{document}

\title{\scshape\bf Shen-Larsson modules over the Lie algebras of divergence zero vector fields on $\mathbb{C}^n$}
\author{Jinxin Hu$^{1}$ and Rencai L\"u$^{2}$}
\footnotetext[1]{J. Hu, Department of Mathematics, Soochow University, Suzhou 215506, China,
{\em E-mail address}: \url{20244007004@stu.suda.edu.cn}.}
\footnotetext[2]{R. L\"u, Department of Mathematics, Soochow University, Suzhou 215506, China,
{\em E-mail address}: \url{rlu@suda.edu.cn}.}
\date{}
\maketitle

\begin{center}
    \vspace*{6pt}
    \begin{minipage}{0.9\textwidth}
		\begin{center}
		\textbf{Abstract}
		\end{center}
Let $n\geq 2$ be an integer, 
	$S_n$ be the Lie algebra of vector fields on $\mathbb{C}^n$
	with zero divergence, and $D_n$ be the Weyl algebra
	over the polynomial algebra $A_n=\mathbb{C}[t_1,t_2,\cdots,t_n]$.
	In this paper, we study the simplicity of the Shen-Larsson $S_n$-module $F(P,M)$,
	where $P$ is a simple $D_n$-module and $M$ is a simple $\mathfrak{sl}_n$-module.
	We obtain the necessary and sufficient conditions for $F(P,M)$
	to be an irreducible module, and determine
	all simple subquotients of $F(P,M)$ when it is reducible.

	\llrrind[1em][\textbf{Keywords}][Lie algebra of divergence zero vector fields; simple module; Shen-Larsson module; Weyl algebra
	]

	\vspace*{6pt}

	\textbf{2020 MSC\quad}17B10, 17B65, 17B66

\end{minipage}
\vspace*{12pt}
\end{center}

\section{Introduction}
We denote by $\mathbb{Z}$, $\mathbb{Z}_+$, $\mathbb{Z}_-$
and $\mathbb{C}$ the set of all integers, nonnegative integers,
non-positive integers and complex numbers; respectively.
For any positive integer $n$, let $A_n$ be the polynomial algebra
$\mathbb{C}[t_1,t_2,\cdots,t_n]$ and 
$\mathcal{A}_n$ be the Laurent polynomial algebra
$\mathbb{C}[t_1^{\pm1},t_2^{\pm1},\cdots,t_n^{\pm1}]$.

The study of Lie algebras of vector fields began with the fundamental work of S. Lie and E. Cartan in
the late 19th century and the early 20th century.
Lie algebras of this type include the Lie algebras
\(W_n\) consisting of the derivations of \(A_n\) and the Lie algebras \(\mathcal{W}_n\) 
consisting of the derivations of \(\mathcal{A}_n\).
In 1973, A. N. Rudakov \cite{R1,R2} obtained the first classification results concerning representations of
$W_n$. 
Rudakov's results tackle the classification of a class of irreducible \(W_n\)-representations 
satisfying some natural topological conditions.
These modules are a particular class of the so-called tensor modules, which we call Shen-Larsson modules now.

The Shen-Larsson modules are introduced by Shen and Larson, see \cite{SG,L}.
In 1986, Shen \cite{SG} constructed a Lie algebra monomorphism from 
$W_n$ (resp. $\mathcal{W}_n$)
to the semidirect product Lie algebras 
$W_n\ltimes \mathfrak{gl}(A_n)$
(resp. $\mathcal{W}_n\ltimes \mathfrak{gl}(\mathcal{A}_n)$)
which are
actually some special full toroidal Lie algebras.
We denote by $D_n$ (resp. $\mathcal{D}_n$) the Weyl algebra
over the polynomial algebra $A_n$ (resp. $\mathcal{A}_n$).
For any module $P$ over $D_n$
(resp. $\mathcal{D}_n$) 
and any module $M$ over the general linear Lie algebra
$\mathfrak{gl}_n$, the tensor product 
$P\otimes_{\mathbb{C}} M$
becomes a $W_n$-module (resp. $\mathcal{W}_n$-module) via Shen's monomorphism.
The modules defined as above are called the Shen-Larsson modules (over $W_n$ and $\mathcal{W}_n$ resp.) and denoted by $F(P,M)$.

The Shen-Larsson modules of $W_1$ and their extensions were studied extensively in the 1970's and
in the 1980's by B. Feigin, D. Fuks, I. Gelfand, and others, see for example, \cite{F,FF}.
G. Liu, R. Lü and K. Zhao established the necessary and 
sufficient conditions for Shen-Larsson modules to be irreducible 
over $W_n$ and $\mathcal{W}_n$; furthermore, they determined 
all submodules of Shen-Larsson modules in the case when these modules are reducible, see \cite{LLZ}.
For more related results, we refer readers to \cite{XL,B,BF1,BF2,CD,E,E1,S} and references therein.

The Shen-Larsson modules of $W_n$ and $\mathcal{W}_n$ play an important role in the classification of simple Harish-Chandra
modules (weight modules with finite-dimensional weight spaces) over these Lie algebras.
In 2016, Y. Billig and V. Futorny \cite{BF1} obtained the complete classification of simple Harish-Chandra modules
of $\mathcal{W}_n$, where the Shen-Larsson modules over $\mathcal{W}_n$ are important components in their classification
results. 
The classification of simple bounded modules (i.e. weight modules for which the 
dimensions of their weight spaces are uniformly bounded by some constant) of $W_n$
was completed in \cite{XL}. The result in \cite{XL} states that every simple
bounded module is a Shen-Larsson module or a submodule of a Shen-Larsson module.
In 2021, D. Grantcharov and V. Serganova \cite{GS} obtained the complete classification of simple Harish-Chandra modules
of $W_n$. Their results show that every simple Harish-Chandra module of $W_n$
is the unique submodule of some Shen-Larsson module over $W_n$ with finite
weight multiplicities.

Let $W_{m,n}:=\mathrm{Der}(A_m\otimes\land(n))$ and $\mathcal{W}_{m,n}:=\mathrm{Der}(\mathcal{A}_m\otimes\land(n))$ 
be the Witt superalgebras,
where $\land(n)$ is the exterior algebra in $n$ odd
variables. The simplicity of Shen-Larsson modules over $W_{m,n}$ and $\mathcal{W}_{m,n}$
was studied in \cite{XW}. In \cite{XL1}, the simple bounded modules over $W_{m,n}$
were classified. Every such module is a simple quotient of a Shen-Larsson module.
In \cite{XL2,BFIK}, the classification of simple strong Harish-Chandra modules over $\mathcal{W}_{m,n}$
was determined. Every such module is either a simple quotient of a Shen-Larsson module
or a module of highest weight type.

Let $\mathcal{S}_n$ $(n\geq 2)$ be the Lie algebra of divergence zero
vector fields on an $n$-dimensional
torus with respect to degree derivations.
The Shen-Larsson modules of $\mathcal{S}_n$ were studied in \cite{LGW,T}
and their simplicity was determined in \cite{DGYZ}.
The simple Harish-Chandra modules over the Virasoro-like algebra
(which is the universal central extension of $\mathcal{S}_2$)
were studied and partially classified in \cite{LTW,LT}.
Let $\bar{S}_n$ $(n\geq 2)$ (resp. $S_n$ $(n\geq 2)$) be the Lie algebra of vector fields on $\mathbb{C}^n$ 
with constant (resp. zero) divergence. 
We classified the 
simple Harish-Chandra modules of $\bar{S}_2$ in \cite{HR}.
Any such module over $\bar{S}_2$ is a Shen-Larsson module or its simple subquotient.
Recently, V. Futorny and S. Tantubay \cite{FT}
considered the Shen-Larsson functor from the category of
modules for the symplectic Lie algebra to the category of modules
for the Hamiltonian Lie algebra and obtained a number of simple cuspidal modules over the Hamiltonian Lie algebra.

In this paper, we establish the necessary and sufficient conditions 
for the Shen-Larsson modules of $S_n$ to be irreducible, and determine 
all their simple subquotients when these modules are reducible.
The paper is arranged as follows. In Section 2, we collect some basic notations
and results for later use.
In Section 3, we study the simplicity of the Shen-Larsson $S_n$-module $F(P,M)$,
where $P$ is a simple $D_n$-module and $M$ is a simple $\mathfrak{sl}_n$-module.
We establish Theorem \ref{t1} and Theorem \ref{t2}, which together constitute the 
main results of this paper. 
Theorem \ref{t1} shows that the Shen-Larsson $S_n$-module $F(P,M)$
is simple provided that 
$M$ is not isomorphic to any fundamental module.
Theorem \ref{t2} addresses the remaining cases.
In Section 4, 
we apply the main results to the Shen-Larsson modules
$F(P,M)$ where both $P$ and $M$ are weight modules,
and obtain all its simple subquotients explicitly.

\section{Notations and preliminaries}
In this section, we collect some notations and results for later use.
Let $e_i\in\mathbb{Z}^n$ be the $n$-tuple with $1$ in the $i$-th 
component and $0$ in all other components.
For any $\alpha\in \mathbb{Z}^n$, let $\alpha_i$ be the $i$-th component
of $\alpha$.
For any $\alpha,\beta\in \mathbb{Z}^n$, we write $\alpha\geq\beta$
if $\alpha_i\geq\beta_i$ for all $i=1,2,\cdots,n$.
A module $M$ over a Lie algebra $\mathfrak{g}$ is called trivial if 
$\mathfrak{g}M=0$.
For any Lie algebra $\mathfrak{g}$, we denote by $U(\mathfrak{g})$ the universal enveloping algebra
of $\mathfrak{g}$. 

Recall that $\mathcal{W}_n=\sum_{i=1}^{n}\mathcal{A}_n\partial_i$
has the following Lie bracket:
$$
\left[ \sum_{i=1}^n{f_i\partial _i},\sum_{j=1}^n{g_j\partial _j} \right] =\sum_{i,j=1}^n{\left( f_j\partial _j\left( g_i \right) -g_i\partial _i\left( f_j \right) \right) \partial _i}
$$
where $f_i,g_j\in \mathcal{A}_n$ and $\partial_i=\frac{\partial}{\partial t_i}$.
$W_n=\sum_{i=1}^{n}A_n\partial_i$ is a subalgebra of $\mathcal{W}_n$.

For $n\geq 2$, $\bar{S}_n\subset W_n$ is a Lie subalgebra
consisting of all derivations with constant divergence, i.e.,
$$\bar{S}_n=\left\{ \sum_{i=1}^n{p_i\partial _i} 
\middle| p_i\in A_n, \sum_{i=1}^n{
\partial _i\left( p_i \right)}\in \mathbb{C} \right\} 
.$$
It is known that $S_n=[\bar{S}_n,\bar{S}_n]$ is a simple
ideal of codimension $1$ in $\bar{S}_n$.

Let $d_i:=t_i\partial_i$ for all $1\leq i\leq n$ and 
$\mathfrak{G}$ be the associative algebra $D_n$
or any Lie subalgebra of $W_n$ that contains $d_1,d_2,\cdots,d_n$. A $\mathfrak{G}$-module 
$V$ is called a weight module if the action of $d_1,d_2,\cdots,d_n$ on $V$ is
diagonalizable, i.e, $V=\bigoplus_{\lambda \in \mathbb{C} ^n}^{}{V_{\lambda}}$, where
$$V_{\lambda}=\left\{ v\in V \middle| d_iv=\lambda _iv\,\,,\,\,i=1,2,\cdots,n \right\}.$$
$V_{\lambda}$ is called the weight space with weight $\lambda$ and
let $\mathrm{supp}(V):=\left\{ \lambda \in \mathbb{C} ^n \middle| V_{\lambda}\ne 0 \right\}$.

Let $f:\mathfrak{G}_1\xrightarrow{}\mathfrak{G}_2$ be a homomorphism of Lie
algebras or associative algebras and $V$ be a $\mathfrak{G}_2$ module.
We can make $V$ into a $\mathfrak{G}_1$ module by $x\cdot v=f(x)v$, 
$\forall x\in\mathfrak{G}_1$, $v\in V$. The resulting module is denoted by $V^f$.

The (full) Fourier transform $F$ 
is the automorphism of $D_n$ defined by $F(t_i)=\partial_i$, $F(\partial_i)=-t_i$
for $i=1,2,\cdots, n$. 
Let $D_{(i)}=\mathbb{C}[t_i,\partial_i]$ be the subalgebra of $D_n$
and $F_{(i)}=F|_{D_{(i)}}$ be the restriction of $F$ to $D_{(i)}$.
Note that $D_n\cong D_{(1)}\otimes D_{(2)}\otimes\cdots\otimes D_{(n)}$.
We recall the simple weight modules of $D_n$.
\begin{lemma}[(\cite{FGM})]\label{l2}
	(\rmnum{1})  Any simple weight $D_{(i)}$ module is isomorphic to
one of the following simple weight $D_{(i)}$ modules:
$$t_{i}^{\lambda_i}\mathbb{C}[t_i^\pm]\,\,,\,\,
A_{(i)}:=\mathbb{C}[t_i]\,\,,\,\,A_{(i)}^{F_{(i)}}(\cong\mathbb{C}[t_i^\pm]/\mathbb{C}[t_i]),$$
where $\lambda_i\in\mathbb{C}\backslash\mathbb{Z}$.

(\rmnum{2})  Let P be any simple weight $D_n$ module. Then $P\cong V_1\otimes V_2\otimes
\cdots\otimes V_n$,
where $V_i$ is a simple $D_{(i)}$ module.
Therefore, the support set of any simple weight $D_n$
module is of the form $X=X_1\times X_2\times\cdots\times X_n$, where
$X_i\in \{a +\mathbb{Z}, \mathbb{Z}_+, \mathbb{Z}_{<0}\}$, 
$a\in \mathbb{C}\backslash\mathbb{Z}$. 
\end{lemma}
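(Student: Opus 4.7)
The plan is to prove the two parts separately: first classify simple weight modules of the one-variable Weyl algebra $D_{(i)}$, then bootstrap to $D_n$ via its tensor decomposition.

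For part (\rmnum{1}), the commutation relations $[d_i,t_i]=t_i$ and $[d_i,\partial_i]=-\partial_i$ show that $t_i$ raises the $d_i$-eigenvalue by $1$ and $\partial_i$ lowers it by $1$. Hence for a simple weight module $V$, the support $\mathrm{supp}(V)$ lies in a single coset $\mu+\mathbb{Z}$. From $t_i\partial_i=d_i$ and $\partial_i t_i=d_i+1$, on $V_\lambda$ the composite $t_i\partial_i$ acts as $\lambda$ and $\partial_i t_i$ as $\lambda+1$. Therefore $\partial_i\colon V_\lambda\to V_{\lambda-1}$ is injective unless $\lambda=0$, and $t_i\colon V_\lambda\to V_{\lambda+1}$ is injective unless $\lambda=-1$. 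When $\mu\notin\mathbb{Z}$ neither obstruction occurs, so every nonzero weight vector generates $V$, each weight space is one-dimensional, and $V\cong t_i^{\mu}\mathbb{C}[t_i^{\pm 1}]$.

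When $\mu\in\mathbb{Z}$, at least one obstruction must appear (otherwise the support would be all of $\mathbb{Z}$ with one-dimensional weight spaces, reproducing $\mathbb{C}[t_i^{\pm 1}]$, which is not a $D_{(i)}$-module). If $\ker(\partial_i)\cap V_0\neq 0$, pick a nonzero vector $v$ in it; then $\mathbb{C}[t_i]v$ is a nonzero submodule isomorphic to $A_{(i)}$, so by simplicity $V\cong A_{(i)}$. Applying the Fourier automorphism $F_{(i)}$, which interchanges the roles of $t_i$ and $\partial_i$, handles the remaining case and yields $V\cong A_{(i)}^{F_{(i)}}$. For part (\rmnum{2}), I would exploit $D_n\cong D_{(1)}\otimes\cdots\otimes D_{(n)}$ together with the commuting diagonal action of $d_1,\ldots,d_n$. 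A standard density/isotypic-component argument (applicable since each $D_{(i)}$ has countable dimension over $\mathbb{C}$) shows that any simple weight $D_n$-module factors as an outer tensor product $V_1\otimes\cdots\otimes V_n$ of simple weight $D_{(i)}$-modules, and combining with part (\rmnum{1}) gives the claimed product form for the support.

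The main obstacle is making the tensor factorization in part (\rmnum{2}) precise: one must show that irreducibility over $D_n$ truly forces such an outer-tensor decomposition in the weight setting, typically by fixing a $(d_1,\ldots,d_{n-1})$-weight, identifying the resulting isotypic component as a simple $D_{(n)}$-module, and then iterating on the remaining factors. Once this is set up, everything else is routine Weyl-algebra bookkeeping already sketched above.
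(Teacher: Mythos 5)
Note first that the paper does not prove this lemma; it is recalled verbatim from the cited reference \cite{FGM}, so there is no internal proof to compare against.

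Your argument for part (\rmnum{1}) has the right structure, but the parenthetical justification for why an obstruction must occur when $\mu\in\mathbb{Z}$ is factually wrong: $\mathbb{C}[t_i^{\pm1}]$ \emph{is} a $D_{(i)}$-module (with $\partial_i$ acting as differentiation); it is merely not simple, and in fact it already exhibits the very obstruction you claim is absent, since $\partial_i\cdot 1=0$. The correct quick argument is simply the identity $\partial_it_i=d_i+1$: if $0\ne v\in V_{-1}$ then $\partial_i(t_iv)=(d_i+1)v=0$, so either $t_iv=0$ (kernel of $t_i$ at weight $-1$) or $0\ne t_iv\in\ker(\partial_i)\cap V_0$; and if $V_{-1}=0$, one first checks that $V_0\ne0$ (otherwise, using injectivity of $\partial_i$ away from weight $0$ and of $t_i$ away from weight $-1$, the support would have to be empty), whence $\partial_i V_0\subseteq V_{-1}=0$. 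Your conclusion is correct, but the reasoning as written would not survive scrutiny.

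For part (\rmnum{2}) you have correctly identified the shape of the argument but, as you yourself note, not supplied it, and this is a genuine gap rather than routine bookkeeping: the general claim that simple modules over $A\otimes B$ are outer tensor products of simples is false without additional hypotheses. What makes it work here is the weight-module structure together with the fact that the zero-weight subalgebra of $D_{(1)}\otimes\cdots\otimes D_{(n-1)}$ is $\mathbb{C}[d_1,\ldots,d_{n-1}]$, which acts by scalars on the chosen isotypic component. Concretely: fix $\lambda\in\mathrm{supp}(P)$ and set $P'=\bigoplus_{k\in\mathbb{Z}}P_{\lambda+ke_n}$, a $D_{(n)}$-submodule of $P$; if $0\ne P''\subsetneq P'$ were a proper $D_{(n)}$-submodule, then $D_nP''=P$ by simplicity, but projecting $D_nP''=\big(D_{(1)}\otimes\cdots\otimes D_{(n-1)}\big)P''$ onto the $(d_1,\ldots,d_{n-1})$-eigenspace of eigenvalue $(\lambda_1,\ldots,\lambda_{n-1})$ recovers only $\mathbb{C}[d_1,\ldots,d_{n-1}]P''=P''$, contradicting $P'\ne P''$. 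Hence $P'$ is simple over $D_{(n)}$, and one iterates over the remaining indices. Until an argument of this kind is actually carried out, part (\rmnum{2}) remains unproved.
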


We denote by $E_{ij}$ the $n\times n$
square matrix with $1$ as its $(i, j)$-entry and $0$ as other entries. We have
the general linear Lie algebra
$$\mathfrak{gl}_n=\bigoplus_{1\leq i,j\leq n}\mathbb{C}E_{ij}$$
and the special linear Lie algebra $\mathfrak{sl}_n$ that
consists of all $n\times n$-matrixes with zero trace.
Let 
$$\mathfrak{H}=\mathrm{span}\left\{ E_{ii} \middle| 1\le i\le n \right\}
\quad\mathrm{and}\quad \mathfrak{h}=\mathrm{span}\left\{ h_i \middle| 1\le i\le n-1 \right\}$$
where $h_i=E_{ii}-E_{i+1,i+1}$. 
Let $$\Lambda^{+}=\left\{ \lambda \in \mathfrak{h} ^* \middle| 
\lambda \left( h_i \right) \in \mathbb{Z} _+ ,\forall 1\le i\leq n-1\right\}$$
be the set of dominant weight with respect to $\mathfrak{h}$.
A $\mathfrak{sl}_n$-module $V$ is called weight module if the action of $\mathfrak{h}$
on $V$ is diagonalizable, i.e., $V=\oplus_{\lambda\in\mathfrak{h}^*}V_{\lambda}$,
where $V_\lambda=\left\{ v\in V \middle| hv=\lambda \left( h \right) v \,\,,\,\,\forall
h\in\mathfrak{h}\right\}$ is called the weight space of $V$ with the weight $\lambda$.
Denote by $\mathrm{supp}(V)=\left\{ \lambda \in \mathfrak{h} ^*\middle| V_{\lambda}\ne 0 \right\} $
the support set of $V$.
For any $\psi\in \mathfrak{h}^*$,
let $V(\psi)$ be the simple $\mathfrak{sl}_n$-module with highest weight $\psi$.

We make $V(\psi)$ into a $\mathfrak{gl}_n$-
module $V(\psi,b)$ by defining the action of the identity matrix $I$ as some scalar
$b\in \mathbb{C}$. 
Define the fundamental weights $\delta_i\in\mathfrak{h}^*$
by $\delta_i(h_j)=\delta_{ij}$ for all $i,j=1, 2,\cdots, n-1$.
For convenience, we set $\delta_0=\delta_n=0\in\mathfrak{h}^*$. It is well-known 
that the fundamental $\mathfrak{gl}_n$-modules $V(\delta_k,k)$,
$k = 0, 1,\cdots, n$, can be realized as 
the exterior product $\bigwedge\nolimits_{}^k{\left( \mathbb{C} ^{n\times 1} \right)}
$ with the action given by
$$X\left( v_1\land v_2\land \cdots \land v_k \right) =\sum_{i=1}^k{v_1\land 
\cdots \land v_{i-1}\land Xv_i\land v_i\land \cdots \land v_k}$$
where $X\in \mathfrak{gl}_n$.

Denote $t^{\alpha}=t_1^{\alpha_1}t_2^{\alpha_2}\cdots t_n^{\alpha_n}$
for any $\alpha\in \mathbb{Z}^n$ and 
$\partial^{\alpha}
=\partial_1^{\alpha_1}\partial_2^{\alpha_2}\cdots \partial_n^{\alpha_n}$
for any $\alpha\in \mathbb{Z}_+^n$.
We recall the definition of the Shen-Larsson modules of $W_n$.
The Shen's algebra homomorphism $\iota:W_n\rightarrow D_n\otimes
U(\mathfrak{gl}_n)$ is defined by
\begin{equation}\label{eq16}
	\iota(t^\alpha\partial_i)=t^\alpha\partial_i\otimes 1+
\sum_{s=1}^{n}\partial_s(t^\alpha)\otimes E_{si}
\end{equation}
for all $\alpha\in\mathbb{Z}_+^n$ and $i=1,2,\cdots,n$.
This homomorphism $\iota$ induces a homomorphism from 
$U(W_n)$ to $D_n\otimes
U(\mathfrak{gl}_n)$,
which we also denote by $\iota$.
The Shen-Larsson modules of \(W_n\) are defined 
to be the \(W_n\)-modules constructed via tensor products, denoted
by \(F(P,M) := (P\otimes_{\mathbb{C}} M)^{\iota}\), where $P$ is an arbitrary $D_n$-module and
$M$ is an arbitrary $\mathfrak{gl}_n$-module.

We denote by $\varepsilon_i\in\mathbb{C}^{n\times 1}$ the 
column vector with $1$ in the 
$i$-th entry and $0$ elsewhere.
Let $P$ be a simple $D_n$-module. The $W_n$-modules $F(P,V(\delta_k,k))$ for 
$0\le k\le n$ are generalization of the modules of differential $k$-forms. 
These
modules form the de Rham complex
$$
	0\xrightarrow{}F\left( P,V\left( \delta _0,0 \right) \right) 
	\xrightarrow{\pi _0}F\left( P,V\left( \delta _1,1 \right) \right) 
	\xrightarrow{\pi _1}F\left( P,V\left( \delta _2,2 \right) \right) 
	\xrightarrow{}\cdots \xrightarrow{\pi _{n-1}}
	F\left( P,V\left( \delta _n,n \right) \right) \xrightarrow{}0,
$$
where
\begin{align*}
	\pi_k:F\left( P,V\left( \delta _k,k \right) \right)
&\xrightarrow{}F\left( P,V\left( \delta _{k+1},k+1\right) \right),\\
p\otimes v&\xrightarrow{}\sum_{l=1}^n{ \partial _lp  
\otimes \varepsilon_l\land v },
\end{align*}
for all $p\in P$, $v\in F\left( P,V\left( \delta _k,k \right) \right)$,
$k=0,1,\cdots,n-1$, 
see \cite[Lemma 3.2]{LLZ}. For $1\le r\le n$, let $$L_n(P,r):=\pi_{r-1}(F(P,V(\delta_{r-1},r-1)))$$
and set $L_n(P,0)=0$. By definition of $\pi_{r-1}$, $L_n(P,r)$ is spanned by
$$\sum_{k=1}^n{ \partial _kp  \otimes \left( \varepsilon _k\land 
\varepsilon _{i_2}\land \cdots \land \varepsilon _{i_r} \right)}=
\sum_{k=1}^n{ \partial _kp\otimes E_{kj}v}
,$$
where $p\in P$ and $j$ is chosen so that 
$v=\varepsilon _j\land \varepsilon _{i_2}\land \cdots \land \varepsilon _{i_r}\ne 0$.

Let $$\widetilde{L_n}\left( P,r \right):=
\left\{ v\in F\left( P,V\left( \delta _r,r \right) \right) \middle| W_nv
\subseteq L_n\left( P,r \right) \right\}.$$ 
Both ${L_n}\left( P,r \right)$
and $\widetilde{L_n}\left( P,r \right)$ are $W_n$-submodules of $F(P,V(\delta_r,r))$.
It is clear that $\widetilde{L_n}\left( P,r \right)/{L_n}\left( P,r \right)$
is trivial.
Recall the following results for $L_n(P,r)$ and $\widetilde{L_n}(P,r)$ from
\cite[Corollary 3.3, Theorem 3.5]{LLZ}.

\begin{lemma}[{(\cite{LLZ})}]\label{l430}
	Let $P$ be a simple $D_n$-module.

	(a) $\widetilde{L_n}\left( P,r \right)=\mathrm{Ker}(\pi_r)$ for all $r=0,1,\cdots,n-1$.
	
	(b) $L_n(P,r)$ is a proper $W_n$-submodule of $F(P,V(\delta_r))$ for 
	all $r=1,\cdots,n-1$.

	(c) As $W_n$-module, $F(P,V(\delta_r))$ is not simple for all $r=1,\cdots,n-1$.
\end{lemma}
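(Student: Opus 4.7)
The plan is to exhibit the complex $(F(P,V(\delta_\bullet,\bullet)),\pi_\bullet)$ as an algebraic de Rham complex and to derive (a) from a Cartan-type magic formula; parts (b) and (c) then reduce to simple non-vanishing observations.

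First I would verify that each $\pi_r$ is a $W_n$-module homomorphism, by expanding $\iota(t^\alpha\partial_i)(p\otimes w)$ and $\iota(t^\alpha\partial_i)\pi_r(p\otimes w)$ on a pure tensor and matching them via the Leibniz identity $\partial_l(\partial_s(t^\alpha)p)=\partial_s\partial_l(t^\alpha)p+\partial_s(t^\alpha)\partial_l p$ together with $E_{si}(\varepsilon_l\wedge w)=\delta_{il}\varepsilon_s\wedge w+\varepsilon_l\wedge E_{si}(w)$. Commutativity $\partial_k\partial_l=\partial_l\partial_k$ combined with antisymmetry of $\wedge$ yields $\pi_{r+1}\pi_r=0$, so $L_n(P,r)\subseteq\mathrm{Ker}(\pi_r)$ for all $r$.

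Next I would define the contraction $i_{t^\alpha\partial_i}\colon F(P,V(\delta_r,r))\to F(P,V(\delta_{r-1},r-1))$ by $i_{t^\alpha\partial_i}(p\otimes e_J)=t^\alpha p\otimes(\varepsilon_i^\vee\lrcorner e_J)$, where $\varepsilon_i^\vee\lrcorner$ is the standard signed contraction on $\bigwedge\mathbb{C}^{n\times 1}$, and prove the Cartan formula
\[
X\cdot v=\pi_{r-1}(i_X v)+i_X(\pi_r v),\qquad X\in W_n,\;v\in F(P,V(\delta_r,r)),
\]
by a direct check on pure tensors, splitting into the cases $i\in J$ and $i\notin J$. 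Granted this, part (a) is immediate in both directions: if $v\in\mathrm{Ker}(\pi_r)$ then $X\cdot v=\pi_{r-1}(i_X v)\in L_n(P,r)$, so $v\in\widetilde{L_n}(P,r)$; conversely if $v\in\widetilde{L_n}(P,r)$, then $W_n\pi_r(v)=\pi_r(W_n v)\subseteq\pi_r(L_n(P,r))=0$, and expanding $\pi_r(v)=\sum_{|J|=r+1}p_J\otimes e_J$, the relations $\partial_i\pi_r(v)=0$ and $(t_k\partial_i)\pi_r(v)=0$ successively force $\partial_i p_J=0$ and $\sum_{J\ni i}p_J\otimes e_J=0$ for every $i$; summing these $n$ identities over $i$ gives $(r+1)\pi_r(v)=0$, hence $\pi_r(v)=0$.

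For (b), since $[\partial_j,t_j]=1$ in $D_n$, $\partial_j$ cannot act as zero on any simple $D_n$-module, so some $p\in P$ has $\partial_j p\ne 0$ for some $j$; for $r\le n-1$, picking an $r$-subset $\{i_1,\dots,i_r\}\subseteq\{1,\dots,n\}\setminus\{j\}$, the element $\pi_r(p\otimes\varepsilon_{i_1}\wedge\cdots\wedge\varepsilon_{i_r})$ contains a nonzero $\varepsilon_j\wedge\varepsilon_{i_1}\wedge\cdots\wedge\varepsilon_{i_r}$-component, so $L_n(P,r)\subseteq\mathrm{Ker}(\pi_r)\subsetneq F(P,V(\delta_r,r))$. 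Then (c) follows from (b) together with the parallel observation that $L_n(P,r)\ne 0$ for $r\ge 1$, by applying the same non-vanishing argument to $\pi_{r-1}$. The main obstacle will be verifying Cartan's formula: although conceptually clear, the sign bookkeeping for the contraction $i_X$ on general wedge products is the one step that is not essentially formal.
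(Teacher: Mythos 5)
Your proof is correct, and the Cartan-formula route is a clean way to package the argument. Note, however, that the paper itself does not prove Lemma 2.2 --- it is stated as a citation to \cite{LLZ} (Corollary 3.3 and Theorem 3.5 there) --- so there is no in-paper proof to compare against; what you have supplied is a self-contained replacement proof.

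A few remarks on the substance. The Cartan identity $X\cdot v=\pi_{r-1}(i_Xv)+i_X(\pi_rv)$ does hold here with the paper's $\iota$, and your verification strategy (split on whether $i$ lies in the index set $J$; the sign bookkeeping comes from moving $\varepsilon_l$ past the slot where $\varepsilon_s$ replaced $\varepsilon_i$) works; the cross term $\sum_{s,l}\partial_l\partial_s(t^\alpha)p\otimes\varepsilon_l\wedge E_{si}(v)$ cancels because it is symmetric in $(l,s)$ in the $D_n$ factor and antisymmetric in the exterior factor, which is the same cancellation that makes $\pi_r$ a $W_n$-map. This gives $\mathrm{Ker}(\pi_r)\subseteq\widetilde{L_n}(P,r)$ directly, including the edge case $r=0$ where the first term is absent. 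Your converse is also correct, but one notational point: the operator you should invoke to get $\sum_{J\ni i}p_J\otimes e_J=0$ (once $\partial_ip_J=0$ is known) is $d_i=t_i\partial_i$, whose Shen image contributes $1\otimes E_{ii}$; writing ``$(t_k\partial_i)$'' suggests $k\ne i$, and for $k\ne i$ the resulting relation $\sum_Jp_J\otimes E_{ki}(e_J)=0$ gives nothing at all when $r=n-1$ (there $J=\{1,\ldots,n\}$, so no $k\notin J$ exists). With $d_i$ the argument is uniform in $r$, and your ``sum over $i$'' trick $(r+1)\pi_r(v)=0$ is a nice way to finish. Parts (b) and (c) are fine: simplicity of $P$ forces $\partial_jP\ne0$ via $[\partial_j,t_j]=1$, the chosen wedge stays inside $\{1,\ldots,n\}\setminus\{j\}$ because $r\le n-1$, and the same non-vanishing of $\pi_{r-1}$ gives $L_n(P,r)\ne0$ for $r\ge1$.
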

\section{Shen-Larsson modules of $S_n$}
We introduce some notations.
For any $\alpha\in\mathbb{Z}^n$ and $i,j=1,2,\cdots,n$,
let
$$L_{ij}^{\alpha}:=t^{\alpha}\left( \left( 1+\alpha _j \right) d_i-\left( 1+\alpha _i \right) d_j \right) 
\in\mathcal{W}_n.$$
Note that $L_{ij}^{\alpha}\in S_n$ if $\alpha\geq -e_i-e_j$.
The algebra $S_n$ is spanned by 
$$\{L_{ij}^{\alpha}|i,j=1,2,\cdots,n;i\ne j;
\alpha\in\mathbb{Z}^n;\alpha\geq -e_i-e_j\}.$$
For any $i,j=1,2,\cdots,n$ with $i\ne j$ and $\alpha\geq-e_i-e_j$,
we have 
\begin{align*}
\iota \left( L_{ij}^{\alpha} \right) =&L_{ij}^{\alpha}\otimes 1+\left( 1+\alpha _i \right) \left( 1+\alpha _j \right) t^{\alpha}\otimes \left( E_{ii}-E_{jj} \right) 
\\
&+\left( 1+\alpha _j \right) \sum_{s\ne i}^{}{\alpha _st^{\alpha +e_i-e_s}\otimes E_{si}}
-\left( 1+\alpha _i \right) \sum_{s\ne j}^{}{\alpha _st^{\alpha +e_j-e_s}\otimes E_{sj}},
\end{align*}
which implies that $\iota(S_n)\subseteq D_n\otimes U(\mathfrak{sl}_n)$.
Hence, the restriction map $\iota|_{S_n}$ is a homomorphism from $S_n$ to $D_n\otimes U(\mathfrak{sl}_n)$.
For an arbitrary $D_n$-module $P$ and
an arbitrary $\mathfrak{sl}_n$-module $M$, the Shen-Larsson module of $S_n$ is defined to be
the tensor module $(P\otimes_{\mathbb{C}} M)^{\iota|_{S_n}}$.
In fact, the Shen-Larsson modules of $S_n$ exactly are the modules obtained by restriction from the Shen-Larsson modules of $W_n$.
We keep the notation $F(P,M)$ for the Shen-Larsson modules of $S_n$.


We need the following lemma.
\begin{lemma}\label{l5}
	For any $\beta\in\mathbb{Z}_+^n$ and $1\leq i,j\leq n$ with $i\ne j$,
	we have $t^{\beta}\otimes \left( E_{ij} \right) ^2\in \iota(U(S_n))$.
\end{lemma}
\begin{proof}
	The equation (\ref{eq16}) in fact gives an algebra homomorphism from $\mathcal{W}_n$
	to $\mathcal{D}_n\otimes U(\mathfrak{gl}_n)$, by simply 
	extending the domain of $\alpha$ to $\mathbb{Z}^n$, 
	and we denote this homomorphism as $\hat{\iota}$.
	Note that $\iota=\hat{\iota}|_{W_n}$.

	For any $\alpha\in \mathbb{Z}^n$, $m\in\mathbb{Z}$ and $i,j=1,2,\cdots,n$ with $i\ne j$, we have 
\begin{align*}
	&\hat{\iota} \left( L_{ij}^{\alpha-me_i}
		\right) \cdot \hat{\iota} \left( t^{me_i}\partial _j \right) 
\\
=&\left( 1+\alpha _j \right) \hat{\iota} \left( t^{\alpha -\left( m-1 \right) e_i}
\partial _i \right) \cdot \hat{\iota} \left( t^{me_i}\partial _j \right) 
-\left( 1+\alpha _i-m \right) \hat{\iota} \left( t^{\alpha -me_i+e_j}\partial _j \right) 
\cdot \hat{\iota} \left( t^{me_i}\partial _j \right) 
\\
=&\left( 1+\alpha _j \right) 
\left( t^{\alpha -\left( m-1 \right) e_i}\partial _i\otimes 1
+\sum_{s=1}^n{\left( \alpha _s-\delta _{si}m+\delta _{si} \right) 
t^{\alpha -\left( m-1 \right) e_i-e_s}\otimes E_{si}} \right) \\
&\phantom{\left( 1+\alpha _j \right)}\cdot \left( t^{me_i}\partial _j\otimes 1+mt^{\left( m-1 \right) e_i}\otimes E_{ij} \right) 
\\
&-\left( 1+\alpha _i-m \right) \left( t^{\alpha -me_i+e_j}
\partial _j\otimes 1+\sum_{s=1}^n{\left( \alpha _s-\delta _{si}m+
\delta _{sj} \right) t^{\alpha -me_i+e_j-e_s}\otimes E_{sj}} \right) \\
&\phantom{-\left( 1+\alpha _i-m \right)}\cdot \left( t^{me_i}\partial _j\otimes 1+mt^{\left( m-1 \right) e_i}\otimes E_{ij} \right) 
\\
=&\left( 1+\alpha _j \right) t^{\alpha -\left( m-1 \right) e_i}\left( t^{me_i}\partial _i+mt^{\left( m-1 \right) e_i} \right) \partial _j\otimes 1
\\
&+m\left( 1+\alpha _j \right) t^{\alpha -\left( m-1 \right) e_i}\left( t^{\left( m-1 \right) e_i}\partial _i+\left( m-1 \right) t^{\left( m-2 \right) e_i} \right) \otimes E_{ij}
\\
&+\left( 1+\alpha _j \right) \sum_{s=1}^n{\left( \alpha _s-\delta _{si}m+\delta _{si} \right) t^{\alpha +e_i-e_s}\partial _j\otimes E_{si}}
\\
&+m\left( 1+\alpha _j \right) \sum_{s=1}^n{\left( \alpha _s-\delta _{si}m+\delta _{si} \right) t^{\alpha -e_s}\otimes E_{si}E_{ij}}
\\
&-\left( 1+\alpha _i-m \right) t^{\alpha +e_j}\partial _j\partial _j\otimes 1
\\
&-m\left( 1+\alpha _i-m \right) t^{\alpha -e_i+e_j}\partial _j\otimes E_{ij}
\\
&-\left( 1+\alpha _i-m \right) \sum_{s=1}^n{\left( \alpha _s-\delta _{si}m+\delta _{sj} \right) t^{\alpha +e_j-e_s}\partial _j\otimes E_{sj}}
\\
&-m\left( 1+\alpha _i-m \right) \sum_{s=1}^n{\left( \alpha _s-\delta _{si}m+\delta _{sj} \right) t^{\alpha -e_i+e_j-e_s}\otimes E_{sj}E_{ij}}
\\
=&\left( 1+\alpha _j \right) t^{\alpha +e_i}\partial _i\partial _j\otimes 1+m\left( 1+\alpha _j \right) t^{\alpha}\partial _j\otimes 1
\\
&+m\left( 1+\alpha _j \right) t^{\alpha}\partial _i\otimes E_{ij}+m\left( m-1 \right) \left( 1+\alpha _j \right) t^{\alpha -e_i}\otimes E_{ij}
\\
&+\left( 1+\alpha _j \right) \sum_{s=1}^n{\left( \alpha _s-\delta _{si}m+\delta _{si} \right) t^{\alpha +e_i-e_s}\partial _j\otimes E_{si}}
\\
&+m\left( 1+\alpha _j \right) \sum_{s=1}^n{\left( \alpha _s-\delta _{si}m+\delta _{si} \right) t^{\alpha -e_s}\otimes E_{si}E_{ij}}
\\
&-\left( 1+\alpha _i-m \right) t^{\alpha +e_j}\partial _j\partial _j\otimes 1-m\left( 1+\alpha _i-m \right) t^{\alpha -e_i+e_j}\partial _j\otimes E_{ij}
\\
&-\left( 1+\alpha _i-m \right) \sum_{s=1}^n{\left( \alpha _s-\delta _{si}m+\delta _{sj} \right) t^{\alpha +e_j-e_s}\partial _j\otimes E_{sj}}
\\
&-m\left( 1+\alpha _i-m \right) \sum_{s=1}^n{\left( \alpha _s-\delta _{si}m+\delta _{sj} \right) t^{\alpha -e_i+e_j-e_s}\otimes E_{sj}E_{ij}}
.
\end{align*}

Then we can write
\begin{equation}\label{eq7}
	\hat{\iota} \left( 
			L_{ij}^{\alpha-me_i}\right) \cdot \hat{\iota} \left( t^{me_i}\partial _j \right) 
=-m^3\left( t^{\alpha -2e_i+e_j}\otimes \left( E_{ij} \right) ^2 \right) +m^2u_2+mu_1+u_0
\end{equation}
where $u_2,u_1,u_0\in \mathcal{D}_n\otimes U(\mathfrak{gl}_n)$ are independent of $m$.
Let $m=0,1,2,3$ in (\ref{eq7}), we get a linear system of equations
whose coefficient matrix is nonsingular.
Then we obtain that
\begin{equation}\label{eq8}
	\begin{aligned}
		t^{\alpha +e_j-2e_i}\otimes \left( E_{ij} \right) ^2=&
		-\frac{1}{6}\hat{\iota} \left( L_{ij}^{\alpha -3e_i} \right) 
		\cdot \hat{\iota} \left( t^{3e_i}\partial _j \right) +\frac{1}{2}
		\hat{\iota} \left( L_{ij}^{\alpha -2e_i} \right) \cdot
		\hat{\iota} \left( t^{2e_i}\partial _j \right) 
\\
&-\frac{1}{2}\hat{\iota} \left( L_{ij}^{\alpha -e_i} \right) \cdot 
\hat{\iota} \left( t^{e_i}\partial _j \right) +\frac{1}{6}\hat{\iota} 
\left( L_{ij}^{\alpha} \right) \cdot \hat{\iota} \left( \partial _j \right) 
.
	\end{aligned}
\end{equation}

Note that if $\alpha\geq 2e_i-e_j$, the elements involved in the right-hand of (\ref{eq8})
belong to the algebra $S_n$, that is,
$t^\beta\otimes \left( E_{ij} \right) ^2\in\iota(U(S_n))$
for all $\beta\in\mathbb{Z}_+^n$.
\end{proof}

Now we can give the first main result in this section.
	\begin{theorem}\label{t1}
	Let $P$ be a simple $D_n$-module and
$M$ be a simple $\mathfrak{sl}_n$-module such that $M$ is not isomorphic to
$V(\delta_k)$ for any $k=0,1,\cdots,n$.
Then $F(P,M)$ is a simple $S_n$-module.
\end{theorem}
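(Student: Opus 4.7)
The plan is to show that any nonzero $S_n$-submodule $V\subseteq F(P,M)$ must equal $F(P,M)$, by first promoting $V$ to a $W_n$-submodule and then invoking the corresponding $W_n$-simplicity criterion from \cite{LLZ}.

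First, note that since $P$ is simple over $D_n$ and $M$ is simple over $\mathfrak{sl}_n$, a standard density argument makes $F(P,M)$ simple over $D_n\otimes U(\mathfrak{sl}_n)$. Now let $V$ be a nonzero $S_n$-submodule. By Lemma \ref{l5}, $V$ is preserved by every operator of the form $t^\beta\otimes E_{ij}^2$ with $\beta\in\mathbb{Z}_+^n$ and $i\neq j$, in addition to being preserved by $\iota(S_n)$. The missing generators of $W_n$ relative to $S_n$ are precisely those $t^\alpha\partial_i$ with $\alpha_i\geq 1$ (all other generators already have zero divergence and hence lie in $S_n$), so the goal is to show that every such $\iota(t^\alpha\partial_i)$ also preserves $V$.

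To do this, I would revisit the identity (\ref{eq7}): the product $\iota(L_{ij}^{\alpha-me_i})\cdot\iota(t^{me_i}\partial_j)$ is a degree-$3$ polynomial in $m$, so a Vandermonde extraction over $m=0,1,2,3$ recovers separately the four coefficients $u_0, u_1, u_2$, and the cubic term $-(t^{\alpha-2e_i+e_j}\otimes E_{ij}^2)$ as operators on $V$. Inspecting the explicit expansion written out in the proof of Lemma \ref{l5} shows that $u_1$ and $u_2$ contain terms such as $t^{\gamma}\partial_k\otimes 1$, $t^\gamma\otimes E_{kl}$, and $t^\gamma\partial_k\otimes E_{kl}$. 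Further commutators and products with elements of $\iota(S_n)$ should then allow one to isolate the individual summands of $\iota(t^\alpha\partial_i)$. This promotes $V$ to a $W_n$-submodule, and then the $W_n$-simplicity theorem of \cite{LLZ} (which requires exactly $M\not\cong V(\delta_k)$) forces $V=F(P,M)$.

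The principal obstacle is the bookkeeping step above: separating the mixed $u_1, u_2$ coefficients into the individual $\iota(t^\alpha\partial_i)$ operators. The hypothesis on $M$ enters crucially at this point. On any fundamental module $V(\delta_k)\cong \bigwedge\nolimits^k\mathbb{C}^n$, each $E_{ij}^2$ with $i\neq j$ acts as zero (since $E_{ij}^2\varepsilon_k=0$ on $\mathbb{C}^n$, and $E_{ij}(v)\wedge E_{ij}(w)=0$ as both are proportional to $\varepsilon_i$), so the operators furnished by Lemma \ref{l5} carry no information beyond $\iota(S_n)$ and the bootstrap collapses. Under the present hypothesis that $M\not\cong V(\delta_k)$, one has enough nontrivial $E_{ij}^2$-action on $M$ (combined with the density of $U(\mathfrak{sl}_n)$ in $\mathrm{End}(M)$) to push the extraction through, which is precisely consistent with the reducible cases set aside for Theorem \ref{t2}.
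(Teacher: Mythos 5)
Your strategy is genuinely different from the paper's and, more importantly, the step you flag as ``the principal obstacle'' is exactly where the argument has a real gap that you have not filled.

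The paper does \emph{not} try to promote $V$ to a $W_n$-submodule. Instead it argues by contradiction: assume $V$ is a proper nonzero $S_n$-submodule, apply Lemma~\ref{l5} together with $\iota(\partial_s)=\partial_s\otimes 1$ and the Jacobson density theorem to show that $P\otimes(E_{ij})^2v_k\subseteq V$; since $M_1:=\{v\in M : P\otimes v\subseteq V\}$ is an $\mathfrak{sl}_n$-submodule of $M$ and $V$ is proper, $M_1=0$, forcing $(E_{ij})^2v_k=0$. Iterating with $t_s\partial_r$ actions then shows $(E_{ij})^2M=0$ for every $i\ne j$, and \cite[Lemma~2.3]{LZ1} together with the $\mathfrak{sl}_2$-triple $\{E_{ij},E_{ii}-E_{jj},E_{ji}\}$ forces $M\cong V(\delta_k)$, contradicting the hypothesis. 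No appeal to \cite{LLZ} and no $W_n$-stability is needed.

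Your plan, by contrast, requires showing that $\iota(t^\alpha\partial_i)$ with $\alpha_i\geq 1$ preserves $V$. There are two concrete problems. First, all the operators you have available stay inside $D_n\otimes U(\mathfrak{sl}_n)$: $\iota(S_n)\subseteq D_n\otimes U(\mathfrak{sl}_n)$ by construction, and $(E_{ij})^2\in U(\mathfrak{sl}_n)$ for $i\neq j$; yet $\iota(t^\alpha\partial_i)$ contains a $t^{\alpha-e_i}\otimes E_{ii}$ summand, and $E_{ii}\notin\mathfrak{sl}_n$. You would therefore have to fix a $\mathfrak{gl}_n$-structure $V(\psi,b)$ and produce the scalar-shifted operator, and nothing in your sketch addresses this. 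Second, and more basically, you never explain \emph{how} the nonvanishing of some $(E_{ij})^2v$ would let you separate the mixed coefficients $u_1,u_2$ in equation~\eqref{eq7} into the individual $W_n$-generators; the Vandermonde step only recovers $u_0,u_1,u_2$ and $t^{\alpha-2e_i+e_j}\otimes(E_{ij})^2$ as whole packages, all of which already lie in $\iota(U(S_n))$. Density of $U(\mathfrak{sl}_n)$ on $M$ holds for every simple $M$, fundamental or not, so it cannot by itself be the lever that distinguishes the two cases. The role the hypothesis actually plays in the paper is to contradict $(E_{ij})^2M=0$, not to enable any ``extraction'' of $W_n$-generators. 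Unless you can make the bootstrap precise, the argument does not close; the paper's contradiction route, using \cite[Lemma~2.3]{LZ1}, is the clean way through.
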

\begin{proof}
    Assume that $V$ be a nonzero proper submodule of $F(P,M)$.
	Let $\sum_{k=1}^q{p_k\otimes v_k}$ be a nonzero element in $V$.

	\textbf{Claim 1}\,\,\,\,For any $u\in D_n$ and $i,j=1,2,\cdots,n$ with $i\ne j$,
	we have $\sum_{k=1}^q{up_k\otimes \left( E_{ij} \right) ^2v_k}\in V$.
	
	Since $\iota(\partial_s)=\partial_s\otimes 1$ for all $1\leq s\leq n$,
	we have $\sum_{k=1}^q{\partial_s p_k\otimes v_k}\in V$.
	Hence, we have 
	$$\sum_{k=1}^q{\partial^\alpha p_k\otimes v_k}\in V$$
	for all $\alpha\in\mathbb{Z}_+^n$. By Lemma \ref{l5}, we obtain that
	$$\sum_{k=1}^q{t^\beta\partial^\alpha p_k\otimes \left( E_{ij} \right) ^2v_k}\in V$$
	for all $\alpha,\beta\in\mathbb{Z}_+^n$.
Now Claim 1 follows from the fact that the algebra $D_n$ is generated 
	by $t_r,\partial_s$ with $1\leq r,s\leq n$.

\textbf{Claim 2}\,\,\,\,Assume that $p_1,p_2,\cdots,p_q$ are linearly independent, then for
any $k=1,2,\cdots,q$ and $i,j=1,2,\cdots,n$ with $i\ne j$, we have $(E_{ij})^2v_k=0$.

Since $P$ is an irreducible $D_n$-module, by the density theorem in ring theory,
for any $p\in P$ and any $k=1,2,\cdots,q$, there exists some $u(p,k)\in D_n$ such that
$u(p,k)p_k=p$ and $u(p,k)p_l=0$ for $l\ne k$. Then from Claim 1, we see that
$P\otimes (E_{ij})^2v_k\subseteq V$ for all
$k=1,2,\cdots,q$.

Set $M_1:=\left\{ v\in M \middle| P\otimes v\subseteq V \right\}$.
Let $v\in M_1$, for any $p\in P$ and $r,s=1,2,\cdots,n$ with $r\ne s$, we have
$$p\otimes E_{rs}v=\left( t_r\partial _s \right) \cdot\left( p\otimes v \right) 
-t_r\partial _sp\otimes v\in V.$$
We see that $M_1$ is a $\mathfrak{sl}_n$-submodule of $M$, and thus it must be $0$ or $M$.
Since $V$ is a proper submodule of $F(P,M)$, we must have $M_1=0$.
Claim 2 follows.

From now on we assume that $p_1,p_2,\cdots,p_q$ are linearly independent.

\textbf{Claim 3}\,\,\,\,For any $i,j=1,2,\cdots,n$ with $i\ne j$, we have 
$(E_{ij})^2M=0$.

Let $s,r=1,2,\cdots,n$ with $s\ne r$, we have
$$\left( t_s\partial _r \right) \cdot \left( \sum_{k=1}^q{p_k\otimes v_k} \right) =
\sum_{k=1}^q{\left( t_s\partial _rp_k\otimes v_k+p_k\otimes E_{sr}v_k \right)}\in V
.$$
By Claim 1, for any $u\in D_n$, we have
$$\sum_{k=1}^q{ut_s\partial _rp_k\otimes \left( E_{ij} \right) ^2v_k}
+\sum_{k=1}^q{up_k\otimes \left( E_{ij} \right) ^2E_{sr}v_k}\in V.$$
By Claim 2, we have
$$\sum_{k=1}^q{up_k\otimes \left( E_{ij} \right) ^2E_{sr}v_k}\in V.$$
Since $p_1,p_2,\cdots,p_q$ are linearly independent, by taking different $u$ in above
formula, we deduce that
$$P\otimes \left( E_{ij} \right) ^2E_{sr}v_k\in V$$
for all $k=1,2,\cdots,q$.
This means that
$\left( E_{ij} \right) ^2E_{sr}v_k\in M_1$ for any $k=1,2,\cdots,q$. Since $M_1=0$, 
we have
$\left( E_{ij} \right) ^2E_{sr}v_k=0$ for any $k=1,2,\cdots,q$.
Repeating this procedure, we deduce that
$$\left( E_{ij} \right) ^2U(\mathfrak{sl}_n)v_k=0$$
for all $k=1,2,\cdots,q$.
Since $M$ is an irreducible $\mathfrak{sl}_n$-module, we obtain that
$(E_{ij})^2M=0$.
Claim 3 follows.

By \cite[Lemma 2.3]{LZ1}, Claim 3 implies that $M$ is a finite-dimensional
highest weight module with highest weight $\mu\in \Lambda^+$.
Let $1\leq i<j\leq n$ and
consider $M$ as a $\mathbb{C}E_{ij}\oplus\mathbb{C}(E_{ii}-E_{jj})\oplus
\mathbb{C}E_{ji}\cong \mathfrak{sl}_2$-module. Then,
Claim 3 implies that the highest weight of $M$ is $0$ or $1$, that is,
$0\leq\mu(E_{ii}-E_{jj})\leq 1$.
Therefore, $M$
is isomorphic to $V(\delta_k)$ for some $k=0,1,\cdots,n$
which is a contradiction.
\end{proof}

For a Lie algebra or an associative algebra $\mathfrak{G}$ and a $\mathfrak{G}$-module
$V$, we denote by $\mathrm{Ann}_{\mathfrak{G}}(v)$
the annihilator of $v\in V$ in $\mathfrak{G}$.
The following result gives an isomorphism criterion for 
two irreducible modules $F(P,M)$.

\begin{proposition}
	Let $P$, $P^\prime$ be irreducible $D_n$-modules
and $M$, $M^\prime$ be irreducible $\mathfrak{sl}_n$-modules. Suppose that 
$M\ncong V(\delta_r)$ for $r=0,1,\cdots,n$.
Then $F(P,M)\cong F(P^\prime,M^\prime)$ if and only if $P\cong P^\prime$
and $M\cong M^\prime$.
\end{proposition}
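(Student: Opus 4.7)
The ``if'' direction is immediate. For the converse, suppose $\phi\colon F(P,M)\xrightarrow{\sim}F(P',M')$ is an $S_n$-module isomorphism. By Theorem~\ref{t1}, $F(P,M)$ is simple, so $F(P',M')$ is simple as well.

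My plan has two parts. I would first rule out $M'\cong V(\delta_r)$ for each $r\in\{0,1,\ldots,n\}$. For $1\le r\le n-1$ this follows from Lemma~\ref{l430}(c): the $W_n$-submodule $L_n(P',r)$ of $F(P',V(\delta_r))$ is proper and nonzero, hence a proper $S_n$-submodule, contradicting simplicity. For $r\in\{0,n\}$ the module $V(\delta_r)$ is trivial as an $\mathfrak{sl}_n$-module, so every $(E_{ij})^2$ acts as zero on it. By Lemma~\ref{l5} with $\beta=e_j$, $t_j\otimes(E_{ij})^2\in\iota(U(S_n))$; the one-line commutator
\[[\iota(\partial_j),\,t_j\otimes(E_{ij})^2]=[\partial_j,t_j]\otimes(E_{ij})^2=1\otimes(E_{ij})^2\]
shows $1\otimes(E_{ij})^2\in\iota(U(S_n))$ for every $i\ne j$. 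Since $M\not\cong V(\delta_k)$, the argument of Theorem~\ref{t1} (via \cite[Lemma~2.3]{LZ1}) yields some $i\ne j$ with $(E_{ij})^2M\ne 0$. Thus the $X\in U(S_n)$ with $\iota(X)=1\otimes(E_{ij})^2$ acts nontrivially on $F(P,M)$ but trivially on $F(P',V(\delta_r))$, contradicting that $\phi$ is an isomorphism.

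Once $M'$ is excluded from every $V(\delta_r)$, I would upgrade $\phi$ to an isomorphism of $A:=D_n\otimes U(\mathfrak{sl}_n)$-modules. Both $P\otimes M$ and $P'\otimes M'$ are simple $A$-modules (using that $D_n$ is simple---so $P,P'$ are faithful---and the classical theorem on simple modules over tensor products of algebras). Since $F(P,M)$ is simple over $\iota(U(S_n))$ with $\operatorname{End}_{S_n}(F(P,M))=\mathbb C$ (Schur--Dixmier), Jacobson's density theorem yields that $\iota(U(S_n))$ is dense in $\operatorname{End}_{\mathbb C}(F(P,M))$; applying density carefully to the semisimple $S_n$-module $F(P,M)\oplus F(P',M')$, one shows that $\phi$ intertwines the full $A$-action. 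The standard decomposition of isomorphisms of simple tensor-product modules over $D_n\otimes U(\mathfrak{sl}_n)$ then yields $\phi=\phi_P\otimes\phi_M$ with $\phi_P\colon P\xrightarrow{\sim}P'$ a $D_n$-iso and $\phi_M\colon M\xrightarrow{\sim}M'$ an $\mathfrak{sl}_n$-iso, so $P\cong P'$ and $M\cong M'$.

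The main obstacle is the upgrade step: because $\iota(U(S_n))\subsetneq A$, the density argument must be handled delicately to produce a single element of $\iota(U(S_n))$ matching a given $a\in A$ on both $v$ and $\phi(v)$ simultaneously. A concrete alternative would be to generate enough of $A$'s action explicitly, combining $\iota(\partial_s)=\partial_s\otimes 1$, $\iota(t_k\partial_l)=t_k\partial_l\otimes 1+1\otimes E_{kl}$ for $k\ne l$, $\iota(L^0_{ij})=(d_i-d_j)\otimes 1+1\otimes(E_{ii}-E_{jj})$, and the $1\otimes(E_{ij})^2$ produced above, together with iterated products and commutators inside $\iota(U(S_n))$.
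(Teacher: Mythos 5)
Your exclusion step is a nice observation and is essentially correct: by Lemma~\ref{l430}(c) the $S_n$-module $F(P',V(\delta_r))$ is not simple for $1\le r\le n-1$, and your use of $1\otimes(E_{ij})^2\in\iota(U(S_n))$ (in fact the proof of Lemma~\ref{l5} gives $t^\beta\otimes(E_{ij})^2\in\iota(U(S_n))$ already for $\beta=0$, so the commutator is not even needed) rules out $r=0,n$. The paper does not isolate this step; it falls out implicitly from the main argument.

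The second step, however, contains a genuine gap. Jacobson density applied to $W=F(P,M)\oplus F(P',M')$ tells you that the image of $U(S_n)$ is dense in the \emph{bicommutant} of that image in $\mathrm{End}_{\mathbb C}(W)$. Identifying $F(P',M')$ with $F(P,M)$ via $\phi$, the commutant is $M_2(\mathbb C)$ and the bicommutant is the set of block-diagonal operators $\bigl(\begin{smallmatrix}T&0\\0&T\end{smallmatrix}\bigr)$ with $T\in\mathrm{End}_{\mathbb C}(F(P,M))$. For $a\in A=D_n\otimes U(\mathfrak{sl}_n)$, the operator $\sigma(a)=\rho(a)\oplus\rho'(a)$ becomes $\bigl(\begin{smallmatrix}\rho(a)&0\\0&\phi^{-1}\rho'(a)\phi\end{smallmatrix}\bigr)$; this lies in the bicommutant \emph{precisely when} $\phi\rho(a)=\rho'(a)\phi$, which is exactly what you are trying to prove. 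So density gives you no leverage here — the argument is circular. (A toy model of the failure: $R=\mathbb C\subset A=\mathbb C[x]$ with $F=F'=\mathbb C$, $x$ acting by $0$ on $F$ and by $1$ on $F'$; the $R$-iso does not intertwine the $A$-action, even though $R$ is ``dense''.) The ``concrete alternative'' you mention — showing $\rho(A)=\rho(\iota(U(S_n)))$ and $\rho'(A)=\rho'(\iota(U(S_n)))$ — would not help either, because you would still need the \emph{same} preimage $x\in U(S_n)$ of $a$ to work simultaneously for $\rho$ and $\rho'$.

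The paper instead proceeds concretely: it writes $\psi(p\otimes v)=\sum_k p'_k\otimes v'_k$ with $p'_1,\dots,p'_q$ linearly independent, uses the $\iota(U(S_n))$-equivariance (via $t^\beta\otimes(E_{ij})^2$ and $\partial_s\otimes 1$) to show $\psi(xp\otimes(E_{ij})^2v)=\sum_k xp'_k\otimes(E_{ij})^2v'_k$ for all $x\in D_n$, then chooses $y\in D_n$ with $yp'_k=\delta_{k1}p'_1$ (Jacobson density inside $\mathrm{End}(P')$ only) to reduce to $\psi(xp\otimes v)=xp'_1\otimes v'$. From this it reads off $\mathrm{Ann}_{D_n}(p)=\mathrm{Ann}_{D_n}(p'_1)$, hence $P\cong P'$ with a specific iso $\psi_1$, and then $\psi(p\otimes v)=\psi_1(p)\otimes v'$ propagates under $t_i\partial_j$ to give $\mathrm{Ann}_{U(\mathfrak{sl}_n)}(v)=\mathrm{Ann}_{U(\mathfrak{sl}_n)}(v')$ and $M\cong M'$. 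You should replace your density upgrade by an argument of this type.
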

\begin{proof}
	The sufficiency is obvious. Now suppose that
	$$\psi:F(P,M)\rightarrow F(P^\prime,M^\prime)$$
	is an isomorphism of $S_n$-modules.
	Let $0\ne p\otimes v \in F(P, M)$. Write 
	$$\psi( p\otimes v)=\sum_{k=1}^q p_k^\prime\otimes v_k^\prime$$
	with $p_1^\prime$, $p_2^\prime$, $\cdots$, $p_q^\prime$ linearly independent.
	Similar to Claim 1 in Theorem \ref{t1}, we have 
	\begin{equation}\label{eq2}
		\psi(xp\otimes (E_{ij})^2v)=\sum_{k=1}^q xp_k^\prime\otimes (E_{ij})^2v_k^\prime
	\end{equation}
	for all $1 \leq i, j\leq n$ with $i\ne j $ and all $x\in D_n$.
Note that we have assumed that $M \ncong V (\delta_r)$
for $r = 0, 1, 2,\cdots, n$. Then we may assume that $(E_{ij})^2v \ne 0$ for some $i\ne j$.
Since $p_1^\prime$, $p_2^\prime$, $\cdots$, $p_q^\prime$
are linearly independent, from the density theorem in ring theory, there exists some
$y\in D_n$ so that $yp_k^\prime=\delta_{k1}p_1^\prime$.
Then we have
$$\psi(yp\otimes (E_{ij})^2v)=yp_1^\prime\otimes (E_{ij})^2v_1^\prime\ne 0,$$
which implies that $yp\ne 0$ and $(E_{ij})^2v\ne 0$.
Now replacing $x$ with $xy$ in (\ref{eq2}), we get
$$\psi (xyp\otimes (E_{ij})^2v)
=\sum_{k=1}^q{xyp_{k}^{\prime}\otimes (E_{ij})^2v_{k}^{\prime}}
=xp_{1}^{\prime}\otimes (E_{ij})^2v_{1}^{\prime}$$
for all $x\in D_n$. Then we regard $yp$ as a new $p$, $(E_{ij})^2v$ as a new $v$
and denote $v^\prime=(E_{ij})^2v_{1}^{\prime}$, we then get
\begin{equation}\label{eq3}
	\psi (xp\otimes v)=xp_{1}^{\prime}\otimes v^{\prime}
\end{equation}
for all $x\in D_n$.

Since $\psi$ is an isomorphism, (\ref{eq3}) implies that $\mathrm{Ann}_{D_n}(p)=
\mathrm{Ann}_{D_n}(p_1^\prime)$. It follows that
$$P\cong D_n/\mathrm{Ann}_{D_n}(p)\cong D_n/\mathrm{Ann}_{D_n}(p_1^\prime)=P^\prime.$$
Moreover, the map $\psi_1:P\rightarrow P^\prime$ with $\psi_1(xp)=xp_1^\prime$ gives 
the isomorphism, where $x\in D_n$, $p\in P$. Hance
\begin{equation}\label{eq4}
	\psi(p\otimes v)=\psi_1(p)\otimes v^{\prime}.
\end{equation}

Now from $\psi((t_i\partial_j)(p\otimes v))=(t_i\partial_j)\psi(p\otimes v)$ and (\ref{eq4}),
we deduce that 
$$\psi(p\otimes E_{ij}v)=\psi_1(p)\otimes E_{ij}v^\prime$$
for all $1\leq i,j\leq n$ with $i\ne j$ and $p\in P$. In this manner, we obtain that
$$\psi(p\otimes uv)=\psi_1(p)\otimes uv^\prime$$
for all $u\in U(\mathfrak{sl}_n)$, $p\in P$.
So we have $\mathrm{Ann}_{U(\mathfrak{sl}_n)}(v)=
\mathrm{Ann}_{U(\mathfrak{sl}_n)}(v^\prime)$.
Since $M$ and $M^\prime$ are irreducible $\mathfrak{sl}_n$-modules, we 
obtain that
$$M\cong U(\mathfrak{sl}_n)/\mathrm{Ann}_{U(\mathfrak{sl}_n)}(v)\cong M^\prime.$$
\end{proof}

We turn to study the $S_n$-modules $F(P,V(\delta_r))$ with $0\leq r\leq n-1$.

Let $\Delta=\oplus_{i=1}^{n}\mathbb{C}\partial_i$.
Then
$\Delta P$ is a $S_n$-submodule of $F(P,V(\delta_0))=P$
and the quotient $P/\Delta P$
is trivial.
In fact, for any $p\in P$, 
$1\leq i,j\leq n$ with $i\ne j$ and $\alpha\geq -e_i-e_j$,
we have 
	\begin{align*}
		L_{ij}^{\alpha}p=&\left( 1+\alpha _j \right) t^{\alpha +e_i}\partial _ip-\left( 1+\alpha _i \right) t^{\alpha +e_j}\partial _jp
	\\
	=&\left( 1+\alpha _j \right) \left( \partial _it^{\alpha +e_i}-\left( 1+\alpha _i \right) t^{\alpha} \right) p
	\\
	&-\left( 1+\alpha _i \right) \left( \partial _jt^{\alpha +e_j}-\left( 1+\alpha _j \right) t^{\alpha} \right) p
	\\
	=&\left( 1+\alpha _j \right) \partial _it^{\alpha +e_i}p-\left( 1+\alpha _i \right) \partial _jt^{\alpha +e_j}p
	\\
	\in & \Delta P.
	\end{align*}
This shows that $S_n P\subseteq \Delta P$, as desired.

\begin{proposition}\label{pro1}
	Let $P$ be a simple $D_n$-module.
	The following statements hold.

	(a) If $P\ncong A_n$, then $F(P,V(\delta_0))=P$ has a unique simple $S_n$-submodule
	$\Delta P$ and the quotient $P/\Delta P$
	is trivial.

	(b) $F(A_n,V(\delta_0))=A_n$ has a unique nonzero proper $S_n$-submodule $\mathbb{C}t^0$
	and therefore has a unique
	simple quotient $A_n/\mathbb{C}t^0$.
\end{proposition}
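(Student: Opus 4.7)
Both parts depend on analyzing $K := \{p \in P : \partial_i p = 0 \text{ for all } i\}$; I claim $K = 0$ if $P \ncong A_n$, and $K$ is one-dimensional if $P \cong A_n$. The argument constructs the $D_n$-module map $A_n \otimes K \to P$, $t^\alpha \otimes k \mapsto t^\alpha k$ (with $\partial_i$ acting as $0$ on $K$), shows injectivity by applying $\partial^\beta$ to a hypothetical kernel element for $\beta$ maximal in the product order on its support, and observes that the image is a $D_n$-submodule of $P$, so equals $P$ by simplicity. Each $A_n \otimes \mathbb{C} k$ is itself a $D_n$-submodule of $P$, forcing $\dim K \leq 1$, and $\dim K = 1$ then forces $P \cong A_n$.

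For part (a), $K = 0$ precludes any one-dimensional trivial $S_n$-submodule (such a $\mathbb{C} w$ satisfies $\partial_i w = 0$ since $\partial_i \in S_n$, so $w \in K = 0$). Hence any simple $S_n$-submodule $W$ of $P$ has trivial image in the trivial quotient $P/\Delta P$ and therefore lies in $\Delta P$. It remains to show $\Delta P$ is simple: let $V \subseteq \Delta P$ be a nonzero $S_n$-submodule and $0 \ne v \in V$; then $\partial^\alpha v \in V$ for all $\alpha \in \mathbb{Z}_+^n$ (as $\partial_i \in S_n$), and $K = 0$ gives some $\partial_i v \ne 0$. By the Jacobson density theorem, every $p \in P$ equals $x \partial_i v$ for some $x \in D_n$, and the task becomes showing $\partial_j p = \partial_j x \partial_i v \in V$ for every $j$ and $p$. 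This amounts to verifying that $\partial_j x \partial_i$ lies in the associative subalgebra $\mathcal{A} \subseteq D_n$ generated by $S_n$, modulo $\mathrm{Ann}_{D_n}(v)$; the verification proceeds by an iterative computation leveraging the decomposition $L_{rs}^\beta = (1+\beta_s)\partial_r t^{\beta+e_r} - (1+\beta_r)\partial_s t^{\beta+e_s}$ together with products such as $(t_r \partial_s)\partial_k \in \mathcal{A}$ and $(d_r - d_s)\partial_k \in \mathcal{A}$, used to build up enough of $\sum_j D_n \partial_j$ acting on $v$. Once simplicity is established, uniqueness of the simple submodule follows immediately.

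For part (b), $P = A_n$: the fact that $\mathbb{C} \cdot 1$ is a trivial $S_n$-submodule is immediate because every derivation annihilates the constant $1$. For uniqueness, take $V$ a nonzero $S_n$-submodule and $0 \ne v = \sum c_\gamma t^\gamma \in V$; choosing $\beta$ maximal in the support of $v$ and applying $\partial^\beta$ extracts the nonzero constant $\beta!\, c_\beta \in V$, so $\mathbb{C} \subseteq V$. If $V \ne \mathbb{C}$, let $d$ be the minimal total degree among non-constant elements of $V$ with zero constant term; for $d \geq 2$, applying any $\partial_j$ to a degree-$d$ element yields a strictly lower-degree element of $V$ that must be constant by minimality, forcing the original to be affine and contradicting $d \geq 2$. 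Hence $d = 1$ and $v = \sum c_k t_k$ with some nonzero $c_k$. Applying $L_{ij}^0 = d_i - d_j$ yields $c_i t_i - c_j t_j \in V$; the degenerate case $c_j = 0$ immediately gives $t_i \in V$, and otherwise a linear dimension count produces every $t_k \in V$. Applying $L_{kj}^{\beta - e_k}$ to $t_k$ for varying $\beta \in \mathbb{Z}_+^n$ then realizes every monomial $t^\beta$ with nonzero coefficient $1 + \beta_j$, so $V = A_n$. The principal obstacle throughout is the simplicity of $\Delta P$ in part (a): unlike $W_n$, whose associative closure in $D_n$ fills up $\mathbb{C} + \sum_j D_n \partial_j$, the $S_n$-generated subalgebra is strictly smaller (omitting elements like $d_k = t_k \partial_k$), so the required transitivity of $\mathcal{A}$ on $\Delta P$ must be established by genuine manipulation of the $L_{ij}^\alpha$ rather than a formal left-ideal argument.
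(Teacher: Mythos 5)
Your part (b) is essentially correct and closely parallels the paper's ``Case ii'' analysis of $A_n$, though you run it directly rather than reducing to the main claim; the argument extracting the constant, then the degree-one part, then all monomials $t^\beta$ via $L_{kj}^{\beta-e_k}t_k=(1+\beta_j)t^\beta$ is sound.

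The genuine gap is in part (a): the simplicity of $\Delta P$. You correctly locate the difficulty --- the associative subalgebra $\mathcal{A}\subseteq D_n$ generated by $S_n$ is strictly smaller than the one generated by $W_n$, since it lacks the $d_k=t_k\partial_k$ --- but you then write ``the verification proceeds by an iterative computation leveraging the decomposition $L_{rs}^\beta=\cdots$ together with products such as $(t_r\partial_s)\partial_k\in\mathcal{A}$ and $(d_r-d_s)\partial_k\in\mathcal{A}$'' without actually performing it. Those two products are trivially in $\mathcal{A}$ (each factor is already in $S_n$) and do not come close to what is needed, which is that the subspace $\partial_j D_n\partial_j$ acts preserving any $S_n$-submodule $N$. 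The obstruction is precisely the monomials $\partial_j t^\gamma\partial_j$ with $\gamma_j$ large: neither $d_j$ nor $t^{\gamma}$ lie in $S_n$ or in $\mathcal{A}$ in any obvious way, so one cannot deduce $\partial_j t^\gamma\partial_j N\subseteq N$ by a formal left-ideal argument. The paper's proof (Claim 1 of Proposition~3.3) produces exactly this. It forms the product $L_{ij}^{\alpha-le_i}\cdot t^{le_i}\partial_j$ (both visibly acting as operators built from $S_n$-elements for $l=0,1$, $i\neq j$), observes that as a function of $l$ the result is an affine polynomial whose constant term is a single $U(S_n)$-product and whose linear coefficient is $\partial_j t^{\alpha+e_j}\partial_j$, and then iterates further applications of $\partial_j$ to bring the $t_j$-exponent down to an arbitrary nonnegative value. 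Only after this does the density argument you invoke (Case i of the paper) give $\Delta P\subseteq N$ for every nonzero $N$. Without supplying this identity --- or some substitute producing $\partial_jD_n\partial_j$ from $U(S_n)$ modulo the annihilator --- the proof of simplicity of $\Delta P$, and hence of (a), is incomplete.

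As a smaller remark, once the paper's Claim 1 is in hand, it shows $\Delta P\subseteq N$ for \emph{every} nonzero $S_n$-submodule $N$ of $P$, which at once gives both simplicity and uniqueness; your preliminary reduction (any simple submodule sits inside $\Delta P$ because $P/\Delta P$ is trivial and $K=0$ excludes one-dimensional trivial submodules) is correct but becomes unnecessary under the paper's stronger formulation of the claim.
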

\begin{proof}
	Let $N$ be a nonzero submodule of $F(P,V(\delta_0))=P$.
	
	{\bf Claim 1 }We have $\partial_jD_n\partial_jN\subseteq N$ for any $j=1,2,\cdots,n$.

	Take any $p\in N$. For any $i,j=1,2,\cdots,n$ with $i\ne j$, 
	$\alpha\in \mathbb{Z}_+^n$ and $l=0 ,1$.
	we have 
	\begin{equation}\label{eq5}
		L_{ij}^{\alpha -le_i}\cdot t^{le_i}d_jp=t^{\alpha}\left( \left( 1+\alpha _j \right) 
d_i-\left( 1+\alpha _i-l \right) d_j \right) d_jp
+l\left( 1+\alpha _j \right) t^{\alpha}d_jp\in N.
	\end{equation}
	Consider the coefficient of $l$ in (\ref{eq5}), we get 
$$
		t^{\alpha}d_jd_jp+\left( 1+\alpha _j \right) t^{\alpha}d_jp
	=\partial_jt^{\alpha+2e_j}\partial_jp\in N,
	$$
	which shows that 
	\begin{equation}\label{eq12}
		\partial_jt^{\alpha+2e_j}\partial_jN\subseteq N.
	\end{equation}
	By applying the action of $\partial_j$ on  
	$\partial_jt^{\alpha+2e_j}\partial_jp\in N$, we have
	\begin{equation}\label{eq13}
			\partial _j\cdot(\partial _jt^{\alpha +2e_j}\partial _jp)
	=\partial _j\partial _jt^{\alpha +2e_j}\partial _jp
	=\partial _jt^{\alpha +2e_j}\partial _j\partial _jp
	+
	\left( 2+\alpha _j \right) \partial _jt^{\alpha +e_j}\partial _jp\in N.
		\end{equation}
From (\ref{eq12}), we can see
$\partial _jt^{\alpha +2e_j}\partial _j\partial _jp\in N$.
Now (\ref{eq13}) implies that $$\partial _jt^{\alpha +e_j}\partial _jp\in N.$$
By applying the action of $\partial_j$ on  
$\partial _jt^{\alpha +e_j}\partial _jp\in N$, a similar
discussion will show that 
\begin{equation}\label{eq122}
	\partial _jt^{\alpha}\partial _jp\in N.
\end{equation}
Replacing $p$ with $\partial^\beta p\in N$ in (\ref{eq122})
for any $\beta\in \mathbb{Z}_+^n$,
we have $\partial _jt^{\alpha}\partial^\beta\partial _jp\in N$.
Since $D_n$ is generated by $t_r,
\partial_s$ for all $1\leq r,s\leq n$, we obtain that
$$
	\partial _jD_n\partial _jp\subseteq N.
$$
Claim 1 follows.

Now let $p\in N$ is a nonzero element.
we divide our following discussion into two cases.

{\bf Case i }There exists some $i_0$ such that $\partial_{i_0}p\ne0$.

For any $1\leq j\leq n$ with $\partial_jp\ne 0$,
since $P$ is a simple $D_n$-module, by Claim 1, 
we have $$\partial_{j}D_n\partial_{j}p=\partial_{j}P\subseteq N.$$
For any $1\leq j\leq n$ with $\partial_jp= 0$, we note that 
$$\partial_jt_j\partial_{i_0}p=(t_j\partial_j+1)\partial_{i_0}p
=t_j\partial_{i_0}\partial_jp+\partial_{i_0}p
=\partial_{i_0}p\ne 0.$$
Then, by Claim 1, we have
$$\partial_{j}D_n\partial_jt_j\partial_{i_0}p=\partial_{j}P\subseteq N.$$
Now we can see that $\Delta P\subseteq N$.

{\bf Case ii }$\partial_j p = 0$ for all $j = 1, 2, \cdots,n$. 

In this case, as a $D_n$-module, $P$ is a quotient of $D_n/I = A_n$, where $I$ is the 
left ideal of $D_n$ generated by $\partial_1, \partial_2, \cdots, \partial_n$. 
By Lemma \ref{l2}, $A_n$ is a simple $D_n$-module and therefore
$P$ is isomorphic to $A_n$ as a $D_n$-module. It is easy 
to see that $\mathbb{C}t^0$ is an $S_n$-submodule of $A_n$. Now if $N$ is a 
nonzero $S_n$-submodule of $A_n$ except $\mathbb{C}t^0$, 
there must exist some $i_0$ and some $p^\prime\in N$ 
such that $\partial_{i_0}p^\prime\ne 0$.
By Case(i), we have $\Delta A_n = A_n \subseteq N$, forcing $N = A_n$. 
Hence, $\mathbb{C}t^0$ is the unique nonzero proper $S_n$-submodule of $A_n$.

Recall we have proved that $\Delta P$ is a $S_n$-submodule of $P$ and $P/\Delta P$
is trivial.
Now (a) follows from Case i and (b) follows from Case ii.
\end{proof}

In the proof of Proposition \ref{pro1}, we incidentally state the following conclusion:
if $P$ is a simple $D_n$-module and there exists some $p\in P$ such that
$\partial_ip=0$ for all $i=1,2,\cdots,n$, then $P\cong A_n$.
We will use this conclusion without further explanation later.

\begin{proposition}\label{pro57}
	Let $P$ be a simple $D_n$-module. The following statements hold.

	(a) If $P\ncong A_n$, then $L_n(P,1)\cong F(P,V(\delta_0))$ as $S_n$-modules.

	(b) If $P\cong A_n$, then $L_n(P,1)\cong A_n/\mathbb{C}t^0$ is a simple $S_n$-module.
\end{proposition}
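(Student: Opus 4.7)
The plan is to use the de Rham differential $\pi_0$ as the natural candidate for the isomorphism. By construction, $\pi_0 : F(P,V(\delta_0,0)) \to F(P,V(\delta_1,1))$ is a $W_n$-module homomorphism (hence also an $S_n$-module homomorphism by restriction), and its image is exactly $L_n(P,1)$ by definition. Identifying $F(P,V(\delta_0,0))$ with $P$ via $V(\delta_0,0) = \mathbb{C}$, the map becomes $p \mapsto \sum_{l=1}^{n} \partial_l p \otimes \varepsilon_l$, so the key first step is to compute $\ker \pi_0$. Using that $\varepsilon_1,\dots,\varepsilon_n$ are linearly independent in $\mathbb{C}^{n\times 1}$, this kernel is precisely the common kernel of $\partial_1,\dots,\partial_n$ acting on $P$.

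For part (a), I would appeal to the observation recorded immediately after Proposition \ref{pro1}: in a simple $D_n$-module $P$, a nonzero vector annihilated by every $\partial_i$ forces $P \cong A_n$. Since the hypothesis is $P \ncong A_n$, this kernel is zero, and $\pi_0$ yields the required $S_n$-module isomorphism $F(P,V(\delta_0)) = P \xrightarrow{\sim} L_n(P,1)$.

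For part (b), with $P \cong A_n$, the common kernel of the $\partial_i$'s is exactly $\mathbb{C}t^0$, so the first isomorphism theorem produces $L_n(A_n,1) \cong A_n/\mathbb{C}t^0$ as $S_n$-modules. Simplicity of this quotient is then immediate from Proposition \ref{pro1}(b), which identifies $\mathbb{C}t^0$ as the unique nonzero proper $S_n$-submodule of $A_n$; consequently $A_n/\mathbb{C}t^0$ is its unique simple quotient.

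I do not foresee a genuine obstacle here. The argument is essentially a one-line application of the first isomorphism theorem to $\pi_0$; the only things to double-check are that $\pi_0$ remains a well-defined $S_n$-morphism (automatic by restriction from $W_n$), the elementary computation of $\ker \pi_0$ via the basis $\{\varepsilon_l\}$, and the invocation of the previously proved structure result in Proposition \ref{pro1}(b) to conclude simplicity in the $P \cong A_n$ case.
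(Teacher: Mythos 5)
Your proof is correct and follows essentially the same route as the paper's: identify $L_n(P,1)$ as the image of $\pi_0$, compute $\ker\pi_0$ as the common kernel of the $\partial_i$, invoke the observation that such a kernel is nonzero only for $P\cong A_n$, and apply the first isomorphism theorem together with Proposition \ref{pro1}(b) for the simplicity in case (b).
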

\begin{proof}
	We note that 
	$$\mathrm{Ker}(\pi_0)=\{p\in F(P,V(\delta_0))=P|\partial_ip=0,\forall i=1,2,\cdots,n\},$$
	which is nonzero if and only if  
	$P\cong A_n$.
	If $P\ncong A_n$, $\pi_0$ is injective, 
	and hence $L_n(P,1)=\mathrm{Im}(\pi_0)\cong F(P,V(\delta_0))$.
	If $P\cong A_n$, then $\mathrm{Ker}(\pi_0)=\mathbb{C}t^0$ and 
	$L_n(A_n,1)=\mathrm{Im}(\pi_0)\cong A_n/\mathbb{C}t^0$ 
	is simple by Proposition \ref{pro1}(b).
\end{proof}

Now we turn to study $S_n$-modules $F(P,V(\delta_r))$ with
$2\leq r\leq n-1$.
We need some calculations here.
As before, let $\hat{\iota}$ be the algebra homomorphism from $\mathcal{W}_n$
	to $\mathcal{D}_n\otimes U(\mathfrak{gl}_n)$
	defined by extending the domain of $\alpha$ to $\mathbb{Z}^n$ in equation (\ref{eq16}).
Let $\alpha\in\mathbb{Z}^n$, $m\in\mathbb{Z}$ and $1\leq i\leq n-2$, we have
\begin{align*}
	&\hat{\iota} \left( L_{i,i+2}^{\alpha -me_i} \right) \cdot 
	\hat{\iota} \left( L_{i,i+1}^{me_i} \right) 
	\\
	=&\left( 1+\alpha _{i+2} \right) \hat{\iota} \left( t^{\alpha -me_i}d_i \right) 
	\cdot\hat{\iota} \left( t^{me_i}d_i \right) 
	-\left( 1+m \right) \left( 1+\alpha _{i+2} \right) 
	\hat{\iota} \left( t^{\alpha -me_i}d_i \right) \cdot\hat{\iota} \left( t^{me_i}d_{i+1} \right) 
	\\
	&-\left( 1+\alpha _i-m \right) \hat{\iota} \left( t^{\alpha -me_i}d_{i+2} \right) 
	\cdot\hat{\iota} \left( t^{me_i}d_i \right) 
	+\left( 1+m \right) \left( 1+\alpha _i-m \right) 
	\hat{\iota} \left( t^{\alpha -me_i}d_{i+2} \right) \cdot\hat{\iota} 
	\left( t^{me_i}d_{i+1} \right) 
	\\
	=&\left( 1+\alpha _{i+2} \right) \left( t^{\alpha -\left( m-1 \right) e_i}\partial _i\otimes 1+\sum_{s=1}^n{\left( \alpha _s-\delta _{si}\left( m-1 \right) \right) t^{\alpha -\left( m-1 \right) e_i-e_s}\otimes E_{si}} \right) 
	\\
	&\phantom{\left( 1+\alpha _{i+2} \right) }\cdot \left( t^{\left( m+1 \right) e_i}\partial _i\otimes 1+\left( m+1 \right) t^{me_i}\otimes E_{ii} \right) 
	\\
	&-\left( 1+m \right) \left( 1+\alpha _{i+2} \right) \left( t^{\alpha -\left( m-1 \right) e_i}\partial _i\otimes 1+\sum_{s=1}^n{\left( \alpha _s-\delta _{si}\left( m-1 \right) \right) t^{\alpha -\left( m-1 \right) e_i-e_s}\otimes E_{si}} \right) 
	\\
	&\phantom{-\left( 1+m \right) \left( 1+\alpha _{i+2} \right) }\cdot \left( t^{me_i+e_{i+1}}\partial _{i+1}\otimes 1+mt^{\left( m-1 \right) e_i+e_{i+1}}\otimes E_{i,i+1}+t^{me_i}\otimes E_{i+1,i+1} \right) 
	\\
	&-\left( 1+\alpha _i-m \right) \left( t^{\alpha -me_i+e_{i+2}}\partial _{i+2}\otimes 1+\sum_{s=1}^n{\left( \alpha _s-\delta _{si}m+\delta _{s,i+2} \right) t^{\alpha -me_i+e_{i+2}-e_s}\otimes E_{s,i+2}} \right) 
	\\
	&\phantom{-\left( 1+\alpha _i-m \right) }\cdot \left( t^{\left( m+1 \right) e_i}\partial _i\otimes 1+\left( m+1 \right) t^{me_i}\otimes E_{ii} \right) 
	\\
	&+\left( 1+m \right) \left( 1+\alpha _i-m \right) \left( t^{\alpha -me_i+e_{i+2}}\partial _{i+2}\otimes 1+\sum_{s=1}^n{\left( \alpha _s-\delta _{si}m+\delta _{s,i+2} \right) t^{\alpha -me_i+e_{i+2}-e_s}\otimes E_{s,i+2}} \right) 
	\\
	&\phantom{+\left( 1+m \right) \left( 1+\alpha _i-m \right) }\cdot \left( t^{me_i+e_{i+1}}\partial _{i+1}\otimes 1+mt^{\left( m-1 \right) e_i+e_{i+1}}\otimes E_{i,i+1}+t^{me_i}\otimes E_{i+1,i+1} \right) 
	\\
	=&\left( 1+\alpha _{i+2} \right) t^{\alpha +2e_i}\partial _i\partial _i\otimes 1
	+\left( m+1 \right) \left( 1+\alpha _{i+2} \right) t^{\alpha +e_i}\partial _i\otimes 1
	\\
	&+\left( 1+\alpha _{i+2} \right) \sum_{s=1}^n{\left( \alpha _s-\delta _{si}\left( m-1 \right) \right) t^{\alpha +2e_i-e_s}\partial _i\otimes E_{si}}
	\\
	&+\left( m+1 \right) \left( 1+\alpha _{i+2} \right) t^{\alpha +e_i}\partial _i\otimes 1
	+m\left( m+1 \right) \left( 1+\alpha _{i+2} \right) t^{\alpha}\otimes 1
	\\
	&+\left( m+1 \right) \left( 1+\alpha _{i+2} \right) \sum_{s=1}^n{\left( \alpha _s-\delta _{si}\left( m-1 \right) \right) t^{\alpha +e_i-e_s}\otimes E_{si}}
	\\
	&-\left( 1+m \right) \left( 1+\alpha _{i+2} \right) t^{\alpha +e_i+e_{i+1}}\partial _i\partial _{i+1}\otimes 1
	-m\left( 1+m \right) \left( 1+\alpha _{i+2} \right) t^{\alpha +e_{i+1}}\partial _{i+1}\otimes 1
	\\
	&-\left( 1+m \right) \left( 1+\alpha _{i+2} \right) \sum_{s=1}^n{\left( \alpha _s-\delta _{si}\left( m-1 \right) \right) t^{\alpha +e_i+e_{i+1}-e_s}\partial _{i+1}\otimes E_{si}}
	\\
	&-m\left( 1+m \right) \left( 1+\alpha _{i+2} \right) t^{\alpha +e_{i+1}}\partial _i\otimes E_{i,i+1}
	-m\left( m-1 \right) \left( 1+m \right) \left( 1+\alpha _{i+2} \right) t^{\alpha -e_i+e_{i+1}}\otimes E_{i,i+1}
	\\
	&-m\left( 1+m \right) \left( 1+\alpha _{i+2} \right) \sum_{s=1}^n{\left( \alpha _s-\delta _{si}\left( m-1 \right) \right) t^{\alpha +e_{i+1}-e_s}\otimes E_{si}E_{i,i+1}}
	\\
	&-\left( 1+m \right) \left( 1+\alpha _{i+2} \right) t^{\alpha +e_i}\partial _i\otimes E_{i+1,i+1}
	-m\left( 1+m \right) \left( 1+\alpha _{i+2} \right) t^{\alpha}\otimes E_{i+1,i+1}
	\\
	&-\left( 1+m \right) \left( 1+\alpha _{i+2} \right) \sum_{s=1}^n{\left( \alpha _s-\delta _{si}\left( m-1 \right) \right) t^{\alpha +e_i-e_s}\otimes E_{si}E_{i+1,i+1}}
	\\
	&-\left( 1+\alpha _i-m \right) t^{\alpha +e_i+e_{i+2}}\partial _{i+2}\partial _i\otimes 1
	\\
	&-\left( 1+\alpha _i-m \right) \sum_{s=1}^n{\left( \alpha _s-\delta _{si}m+\delta _{s,i+2} \right) t^{\alpha +e_i+e_{i+2}-e_s}\partial _i\otimes E_{s,i+2}}
	\\
	&-\left( 1+\alpha _i-m \right) \left( m+1 \right) t^{\alpha +e_{i+2}}\partial _{i+2}\otimes E_{ii}
	\\
	&-\left( 1+\alpha _i-m \right) \left( m+1 \right) \sum_{s=1}^n{\left( \alpha _s-\delta _{si}m+\delta _{s,i+2} \right) t^{\alpha +e_{i+2}-e_s}\otimes E_{s,i+2}E_{ii}}
	\\
	&+\left( 1+m \right) \left( 1+\alpha _i-m \right) t^{\alpha +e_{i+1}+e_{i+2}}\partial _{i+1}\partial _{i+2}\otimes 1
	\\
	&+\left( 1+m \right) \left( 1+\alpha _i-m \right) \sum_{s=1}^n{\left( \alpha _s-\delta _{si}m+\delta _{s,i+2} \right) t^{\alpha +e_{i+1}+e_{i+2}-e_s}\partial _{i+1}\otimes E_{s,i+2}}
	\\
	&+\left( 1+m \right) \left( 1+\alpha _i-m \right) mt^{\alpha -e_i+e_{i+1}+e_{i+2}}\partial _{i+2}\otimes E_{i,i+1}
	\\
	&+\left( 1+m \right) \left( 1+\alpha _i-m \right) m\sum_{s=1}^n{\left( \alpha _s-\delta _{si}m+\delta _{s,i+2} \right) t^{\alpha -e_i+e_{i+1}+e_{i+2}-e_s}\otimes E_{s,i+2}E_{i,i+1}}
	\\
	&+\left( 1+m \right) \left( 1+\alpha _i-m \right) t^{\alpha +e_{i+2}}\partial _{i+2}\otimes E_{i+1,i+1}
	\\
	&+\left( 1+m \right) \left( 1+\alpha _i-m \right) \sum_{s=1}^n{\left( \alpha _s-\delta _{si}m+\delta _{s,i+2} \right) t^{\alpha +e_{i+2}-e_s}\otimes E_{s,i+2}E_{i+1,i+1}}
	.
	\end{align*}
Then we can write
\begin{equation}\label{eq9}
	\hat{\iota} \left( L_{i,i+2}^{\alpha -me_i} \right) \cdot \hat{\iota} \left( L_{i,i+1}^{me_i} \right)
	=m^4z_4+m^3g(\alpha,i)+m^2z_2+mz_1+z_0
\end{equation}
where $z_4,z_2,z_1,z_0\in\mathcal{D}_n\otimes U(\mathfrak{gl}_n)$ are independent
of $m$ and 
\begin{align*}
	g(\alpha,i):=&\left( 1+\alpha _{i+2} \right) t^{\alpha -e_i+e_{i+1}}\otimes \left( E_{ii}E_{i,i+1}-E_{i,i+1} \right) 
 \\
 &-t^{\alpha -e_i+e_{i+2}}\otimes E_{i,i+2}E_{ii}
 +t^{\alpha +e_{i+1}+e_{i+2}-e_i}\partial _{i+1}\otimes E_{i,i+2}
 \\
 &-t^{\alpha +e_{i+1}+e_{i+2}-e_i}\partial _{i+2}\otimes E_{i,i+1}
 -\sum_{s=1}^n{\alpha _st^{\alpha +e_{i+1}+e_{i+2}-e_i-e_s}\otimes E_{s,i+2}E_{i,i+1}}
 \\
 &-t^{\alpha +e_{i+1}-e_i}\otimes E_{i+2,i+2}E_{i,i+1}
 -\alpha _it^{\alpha +e_{i+1}+e_{i+2}-2e_i}\otimes E_{i,i+2}E_{i,i+1}
\\&+t^{\alpha +e_{i+2}-e_i}\otimes E_{i,i+2}E_{i+1,i+1}.
 \end{align*}
Let $m=-1,0,1,2,3$ in (\ref{eq9}), we get a linear system of equations
whose coefficient matrix is nonsingular.
 Then we obtain that
 \begin{equation}\label{eq10}
	\begin{aligned}
		g\left( \alpha ,i \right) =&-\frac{1}{12}\hat{\iota} \left( L_{i,i+2}^{\alpha -3e_i} 
		\right) \cdot \hat{\iota} \left( L_{i,i+1}^{3e_i} \right) 
		+\frac{1}{2}\hat{\iota} \left( L_{i,i+2}^{\alpha -2e_i} \right) \cdot 
		\hat{\iota} \left( L_{i,i+1}^{2e_i} \right) 
\\
&-\hat{\iota} \left( L_{i,i+2}^{\alpha -e_i} \right) \cdot 
\hat{\iota} \left( L_{i,i+1}^{e_i} \right) +\frac{5}{6}\hat{\iota} 
\left( L_{i,i+2}^{\alpha} \right) \cdot \hat{\iota} \left( L_{i,i+1}^{0} \right) 
\\
&-\frac{1}{4}\hat{\iota} \left( L_{i,i+2}^{\alpha +e_i} \right) 
\cdot \hat{\iota} \left( L_{i,i+1}^{-e_i} \right) 
.
	\end{aligned}
 \end{equation}
Note that if $\alpha\geq 2e_i-e_{i+2}$, the elements involved in the right-hand of (\ref{eq10})
 belong to the algebra $S_n$, that is,
$g\left( \alpha ,i \right)\in\iota(U(S_n))$.

For convenience, we set
 \begin{align*}
	 f(\alpha,i):=&\left( 1+\alpha _{i+2} \right) t^{\alpha -e_i+e_{i+1}}\otimes \left( E_{ii}E_{i,i+1}-E_{i,i+1} \right) 
	 \\
	 &-t^{\alpha -e_i+e_{i+2}}\otimes E_{i,i+2}E_{ii}
	 \\
	 &-\alpha _it^{\alpha +e_{i+1}+e_{i+2}-2e_i}\otimes E_{i,i+2}E_{i,i+1}
 \end{align*}
 Then, we have
 \begin{align*}
	 g(\alpha,i)-f(\alpha,i)=&t^{\alpha +e_{i+1}+e_{i+2}-e_i}\partial _{i+1}\otimes E_{i,i+2}-t^{\alpha +e_{i+1}+e_{i+2}-e_i}\partial _{i+2}\otimes E_{i,i+1}
 \\
 &-\sum_{s=1}^n{\alpha _st^{\alpha +e_{i+1}+e_{i+2}-e_i-e_s}\otimes E_{s,i+2}E_{i,i+1}}
 \\
 &-t^{\alpha +e_{i+1}-e_i}\otimes E_{i+2,i+2}E_{i,i+1}+t^{\alpha +e_{i+2}-e_i}\otimes E_{i,i+2}E_{i+1,i+1}
 \\
 =&t^{\alpha +e_{i+1}+e_{i+2}-e_i}\partial _{i+1}\otimes E_{i,i+2}-t^{\alpha +e_{i+1}+e_{i+2}-e_i}\partial _{i+2}\otimes E_{i,i+1}
 \\
 &-\sum_{s=1}^n{\alpha _st^{\alpha +e_{i+1}+e_{i+2}-e_i-e_s}\otimes E_{s,i+2}E_{i,i+1}}-t^{\alpha +e_{i+1}-e_i}\otimes E_{i+2,i+2}E_{i,i+1}
 \\
 &+t^{\alpha +e_{i+2}-e_i}\otimes E_{i,i+2}E_{i+1,i+1}+\sum_{s=1}^n{\partial _s\left( t^{\alpha +e_{i+1}+e_{i+2}-e_i} \right) \otimes E_{s,i+2}E_{i,i+1}}
 \\
 &-\sum_{s=1}^n{\partial _s\left( t^{\alpha +e_{i+1}+e_{i+2}-e_i} \right) \otimes E_{s,i+2}E_{i,i+1}}
 \\
 =&t^{\alpha +e_{i+1}+e_{i+2}-e_i}\partial _{i+1}\otimes E_{i,i+2}-t^{\alpha +e_{i+1}+e_{i+2}-e_i}\partial _{i+2}\otimes E_{i,i+1}
 \\
 &-\sum_{s=1}^n{\alpha _st^{\alpha +e_{i+1}+e_{i+2}-e_i-e_s}\otimes E_{s,i+2}E_{i,i+1}}
 \\
 &-t^{\alpha +e_{i+1}-e_i}\otimes E_{i+2,i+2}E_{i,i+1}+t^{\alpha +e_{i+2}-e_i}\otimes E_{i,i+2}E_{i+1,i+1}
 \\
 &+\sum_{s=1}^n{\left( \alpha _s+\delta _{s,i+1}+\delta _{s,i+2}-\delta _{si} \right) t^{\alpha +e_{i+1}+e_{i+2}-e_i-e_s}\otimes E_{s,i+2}E_{i,i+1}}
 \\
 &-\sum_{s=1}^n{\left( \partial _st^{\alpha +e_{i+1}+e_{i+2}-e_i}-t^{\alpha +e_{i+1}+e_{i+2}-e_i}\partial _s \right) \otimes E_{s,i+2}E_{i,i+1}}
 \\
 =&t^{\alpha +e_{i+1}+e_{i+2}-e_i}\partial _{i+1}\otimes E_{i,i+2}-t^{\alpha +e_{i+1}+e_{i+2}-e_i}\partial _{i+2}\otimes E_{i,i+1}
 \\
 &-\sum_{s=1}^n{\alpha _st^{\alpha +e_{i+1}+e_{i+2}-e_i-e_s}\otimes E_{s,i+2}E_{i,i+1}}
 \\
 &-t^{\alpha +e_{i+1}-e_i}\otimes E_{i+2,i+2}E_{i,i+1}+t^{\alpha +e_{i+2}-e_i}\otimes E_{i,i+2}E_{i+1,i+1}
 \\
 &+\sum_{s=1}^n{\alpha _st^{\alpha +e_{i+1}+e_{i+2}-e_i-e_s}\otimes E_{s,i+2}E_{i,i+1}}
 \\
 &+t^{\alpha +e_{i+2}-e_i}\otimes E_{i+1,i+2}E_{i,i+1}+t^{\alpha +e_{i+1}-e_i}\otimes E_{i+2,i+2}E_{i,i+1}
 \\
 &-t^{\alpha +e_{i+1}+e_{i+2}-2e_i}\otimes E_{i,i+2}E_{i,i+1}
 \\
 &-\sum_{s=1}^n{\partial _st^{\alpha +e_{i+1}+e_{i+2}-e_i}\otimes E_{s,i+2}E_{i,i+1}}
 \\
 &+\sum_{s=1}^n{t^{\alpha +e_{i+1}+e_{i+2}-e_i}\partial _s\otimes E_{s,i+2}E_{i,i+1}}
 \\
 =&u(\alpha,i)
 +t^{\alpha +e_{i+2}-e_i}\otimes (E_{i,i+2}E_{i+1,i+1}+E_{i+1,i+2}E_{i,i+1})
 \\
 &-t^{\alpha +e_{i+1}+e_{i+2}-2e_i}\otimes E_{i,i+2}E_{i,i+1},
 \end{align*}
where
 \begin{align*}
	 u(\alpha,i):=&t^{\alpha +e_{i+1}+e_{i+2}-e_i}\partial _{i+1}\otimes E_{i,i+2}-t^{\alpha +e_{i+1}+e_{i+2}-e_i}\partial _{i+2}\otimes E_{i,i+1}
 \\
 &-\sum_{s=1}^n{\partial _st^{\alpha +e_{i+1}+e_{i+2}-e_i}\otimes E_{s,i+2}E_{i,i+1}}+\sum_{s=1}^n{t^{\alpha +e_{i+1}+e_{i+2}-e_i}\partial _s\otimes E_{s,i+2}E_{i,i+1}}
 .
 \end{align*}
 
 \begin{lemma}\label{l3}
 Let $n\geq 3$, $2\leq r\leq n-1$, $\alpha\geq 2e_i-e_{i+2}$ and $P$ be a simple $D_n$-module. 
For any $p\otimes v\in F(P,V(\delta_r))$, 
 we have $g(\alpha,i)(p\otimes v)=u(\alpha,i)(p\otimes v)$.
 \end{lemma}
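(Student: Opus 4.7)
\emph{Proof plan.}\quad The long symbolic expansion carried out in the paragraph immediately preceding the statement already yields the identity
\[
g(\alpha,i)-u(\alpha,i)=f(\alpha,i)+t^{\alpha+e_{i+2}-e_i}\otimes\bigl(E_{i,i+2}E_{i+1,i+1}+E_{i+1,i+2}E_{i,i+1}\bigr)-t^{\alpha+e_{i+1}+e_{i+2}-2e_i}\otimes E_{i,i+2}E_{i,i+1}
\]
in $\mathcal{D}_n\otimes U(\mathfrak{gl}_n)$. Consequently, the lemma will follow once it is shown that every $U(\mathfrak{gl}_n)$-coefficient appearing on the right annihilates each vector of $V(\delta_r)$. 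Realising $V(\delta_r)$ as the exterior power $\bigwedge^r(\mathbb{C}^{n\times 1})$ via the $\mathfrak{gl}_n$-extension $V(\delta_r,r)$ (whose restriction to $\mathfrak{sl}_n$ is $V(\delta_r)$), this reduces to the four operator identities
\[
(E_{ii}-1)E_{i,i+1}=0,\quad E_{i,i+2}E_{ii}=0,\quad E_{i,i+2}E_{i,i+1}=0,\quad E_{i,i+2}E_{i+1,i+1}+E_{i+1,i+2}E_{i,i+1}=0
\]
on $\bigwedge^r(\mathbb{C}^{n\times 1})$, the first three of which account for the coefficients inside $f(\alpha,i)$.

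Each identity is then a short verification on a standard basis element $v=\varepsilon_{j_1}\wedge\cdots\wedge\varepsilon_{j_r}$, using only that $E_{ab}$ acts as a derivation with $E_{ab}\varepsilon_c=\delta_{bc}\varepsilon_a$, and that any wedge with a repeated factor vanishes. The first three identities are essentially immediate: whenever $E_{i,i+1}v$ or $E_{ii}v$ is nonzero, the resulting wedge contains $\varepsilon_i$ exactly once, and the subsequent $E_{i,i+2}$-action (respectively the scalar correction $E_{ii}-1$) either annihilates it directly or produces a wedge with a repeated $\varepsilon_i$. The fourth identity requires only a brief case analysis on whether each of $\varepsilon_i,\varepsilon_{i+1},\varepsilon_{i+2}$ occurs in $v$; the only nontrivial case is $\varepsilon_{i+1},\varepsilon_{i+2}\in v$ with $\varepsilon_i\notin v$, in which case writing $v=\varepsilon_{i+1}\wedge\varepsilon_{i+2}\wedge w$ one computes $E_{i,i+2}E_{i+1,i+1}v=\varepsilon_{i+1}\wedge\varepsilon_i\wedge w$ and $E_{i+1,i+2}E_{i,i+1}v=\varepsilon_i\wedge\varepsilon_{i+1}\wedge w$, whose sum is zero.

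No serious obstacle is anticipated: the entire content of the lemma amounts to the symbolic bookkeeping performed in the expansion preceding the statement, combined with these elementary derivation-style checks on the exterior module. The hypothesis $\alpha\geq 2e_i-e_{i+2}$ plays no role in this identity itself; it is used only afterwards to ensure that $g(\alpha,i)$ (and hence $u(\alpha,i)$) actually lies in $\iota(U(S_n))$ so that the conclusion can be deployed in the structural arguments that follow.
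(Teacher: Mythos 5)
Your proof is correct and follows essentially the same route as the paper: expand $g(\alpha,i)-u(\alpha,i)$ using the computation preceding the lemma, observe that all the $U(\mathfrak{gl}_n)$-coefficients involved are the four operators $(E_{ii}-1)E_{i,i+1}$, $E_{i,i+2}E_{ii}$, $E_{i,i+2}E_{i,i+1}$ and $E_{i,i+2}E_{i+1,i+1}+E_{i+1,i+2}E_{i,i+1}$, and verify that each annihilates every standard basis vector of $\bigwedge^r(\mathbb{C}^{n\times 1})$, with the only substantive check being the last operator. Your side remark that the hypothesis $\alpha\ge 2e_i-e_{i+2}$ is irrelevant to the identity itself and only matters for the later membership $g(\alpha,i)\in\iota(U(S_n))$ is also accurate.
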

 \begin{proof}
	 It sufficient to prove the statements for all 
	 $v=\varepsilon _{i_1}\land \varepsilon _{i_2}\land \cdots \land \varepsilon _{i_r}$,
	where $i_1,i_2,\cdots,i_r$ are pairwise distinct.
	Note that 
	\begin{align*}
		g(\alpha,i)-u(\alpha,i)
		=&\left( 1+\alpha _{i+2} \right) t^{\alpha -e_i+e_{i+1}}
		\otimes \left( E_{ii}E_{i,i+1}-E_{i,i+1} \right) 
		-t^{\alpha -e_i+e_{i+2}}\otimes E_{i,i+2}E_{ii}
		\\
		&-\alpha _it^{\alpha +e_{i+1}+e_{i+2}-2e_i}\otimes E_{i,i+2}E_{i,i+1}
		\\
		&+t^{\alpha +e_{i+2}-e_i}\otimes (E_{i,i+2}E_{i+1,i+1}+E_{i+1,i+2}E_{i,i+1})
 \\
 &-t^{\alpha +e_{i+1}+e_{i+2}-2e_i}\otimes E_{i,i+2}E_{i,i+1}
	\end{align*}

	Firstly, it's easy to see that
	 $$\left( E_{ii}E_{i,i+1}-E_{i,i+1} \right) v=E_{i,i+2}E_{ii}v=E_{i,i+2}E_{i,i+1}v=E_{i,i+2}E_{i,i+1}v
 =0.$$
 
 Secondly, we have $$(E_{i,i+2}E_{i+1,i+1}+E_{i+1,i+2}E_{i,i+1})v=0$$
 unless $i\notin \left\{ i_1,i_2,\cdots ,i_r \right\}$ and 
 $i+1,i+2\in\left\{ i_1,i_2,\cdots ,i_r \right\}$.
 Without loss of generality, we can assume that
 $v=\varepsilon _{i+1}\land \varepsilon _{i+2}\land \cdots \land \varepsilon _{i_r}$.
 Then 
 \begin{align*}
	 &(E_{i,i+2}E_{i+1,i+1}+E_{i+1,i+2}E_{i,i+1})v
 \\
 =&(E_{i,i+2}E_{i+1,i+1}+E_{i+1,i+2}E_{i,i+1})\left( \varepsilon _{i+1}\land \varepsilon _{i+2}\land \cdots \land \varepsilon _{i_r} \right) 
 \\
 =&\varepsilon _{i+1}\land \varepsilon _i\land \cdots \land \varepsilon _{i_r}+\varepsilon _i\land \varepsilon _{i+1}\land \cdots \land \varepsilon _{i_r}
 \\
 =&0.
 \end{align*}

Thus $(g(\alpha,i)-u(\alpha,i))(p\otimes v)=0$. The Lemma follows.
 \end{proof}
 Let
 \begin{align*}
	 h(\alpha,i):=&t^{\alpha +e_{i+1}+e_{i+2}-e_i}\partial _{i+1}\otimes E_{i,i+2}-t^{\alpha +e_{i+1}+e_{i+2}-e_i}\partial _{i+2}\otimes E_{i,i+1}
	 \\
	 &+\sum_{s=1}^n{t^{\alpha +e_{i+1}+e_{i+2}-e_i}\partial _s\otimes E_{s,i+2}E_{i,i+1}}.
 \end{align*}
 Then $u(\alpha,i)=h(\alpha,i)
 -\sum_{s=1}^n{\partial _st^{\alpha +e_{i+1}+e_{i+2}-e_i}\otimes E_{s,i+2}E_{i,i+1}}$.

The proof of the following lemma is similar to \cite[Lemma 4.14]{DGYZ}.
 \begin{lemma}\label{l4}
	 Let $n\geq 3$, $2\leq r\leq n-1$, $\alpha\geq 2e_i-e_{i+2}$ 
	 and $P$ be a simple $D_n$-module. 
We have $h(\alpha,i)L_n(P,r)=0$.
 \end{lemma}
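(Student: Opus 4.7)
The plan is to verify the vanishing directly on the explicit generators of $L_n(P,r)$. I take a typical spanning element
\[
x = \sum_{k=1}^n \partial_k p \otimes \varepsilon_k \wedge w,\qquad w = \varepsilon_{i_2}\wedge \cdots \wedge \varepsilon_{i_r},
\]
and set $\beta := \alpha + e_{i+1} + e_{i+2} - e_i$. The main tool is the Leibniz identity
\[
E_{ab}(\varepsilon_k \wedge w) = \delta_{kb}\,\varepsilon_a \wedge w + \varepsilon_k \wedge E_{ab} w,
\]
applied to each of the three summands of $h(\alpha,i)$.

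First I would collect the ``head'' contributions, in which the matrix unit lands on $\varepsilon_k$. The summands $t^\beta \partial_{i+1}\otimes E_{i,i+2}$ and $-t^\beta \partial_{i+2}\otimes E_{i,i+1}$ each yield a single head term $\pm t^\beta \partial_{i+1}\partial_{i+2}p \otimes \varepsilon_i \wedge w$, and these cancel because the partial derivatives commute. For the double-product piece $\sum_s t^\beta \partial_s \otimes E_{s,i+2}E_{i,i+1}$, I would rewrite it using the $\mathfrak{gl}_n$ relation
\[
E_{s,i+2}E_{i,i+1} = E_{i,i+1}E_{s,i+2} - \delta_{s,i+1}E_{i,i+2},
\]
which is valid because $i\ne i+2$. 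The $-\delta_{s,i+1}E_{i,i+2}$ term contributes exactly the negative of the ``tail'' (the $\varepsilon_k \wedge E_{i,i+2}w$ piece) of the first summand, and a second Leibniz expansion of $E_{s,i+2}(\varepsilon_k \wedge w)$ isolates a $\delta_{k,i+2}$ head term that cancels the ``tail'' of the second summand.

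What remains after these cancellations is the residue
\[
\sum_{s} t^\beta \partial_s \partial_{i+1} p \otimes \varepsilon_i \wedge E_{s,i+2} w \;+\; \sum_{s,k} t^\beta \partial_s \partial_k p \otimes \varepsilon_k \wedge E_{i,i+1} E_{s,i+2} w.
\]
Since $E_{s,i+2} w = 0$ unless $i+2$ appears among $\{i_2,\ldots,i_r\}$, both sums vanish identically outside that case. When $i+2$ does occur among the indices, the operator $E_{s,i+2}$ simply replaces $\varepsilon_{i+2}$ by $\varepsilon_s$; combining the symmetry $\partial_s\partial_k = \partial_k\partial_s$ with the antisymmetry of the wedge then produces a pairwise cancellation of the surviving summands, as one verifies exactly along the lines of \cite[Lemma 4.14]{DGYZ}. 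Since the spanning elements exhaust $L_n(P,r)$, this gives $h(\alpha,i) L_n(P,r) = 0$.

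The main obstacle is purely administrative: the Leibniz expansions produce on the order of a dozen exterior-algebra terms, and one must systematically track signs from reordering the wedge factors and correctly pair up the contributions that cancel. Once the two key structural inputs are isolated (the commutator $[E_{s,i+2},E_{i,i+1}] = -\delta_{s,i+1}E_{i,i+2}$ and the fact that the $\partial_s\partial_k$ coefficients are symmetric in $s,k$), the remainder of the argument is routine bookkeeping.
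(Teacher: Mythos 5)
Your proof is correct, and both you and the paper verify that $h(\alpha,i)$ annihilates generators of $L_n(P,r)$ directly, but the organization is genuinely different. The paper expands $h(\alpha,i)$ acting on a generator, groups the resulting terms by the second-order $\partial$ monomial that appears (seven groups: $\partial_{i+1}^2$, $\partial_{i+2}^2$, $\partial_{i+1}\partial_{i+2}$, $\partial_l\partial_{i+1}$, $\partial_l\partial_{i+2}$, $\partial_l^2$, $\partial_l\partial_s$ with $l,s\notin\{i+1,i+2\}$), and checks by inspection on exterior monomials that the matrix-unit coefficient of each group vanishes. You instead invoke the commutator $[E_{s,i+2},E_{i,i+1}]=-\delta_{s,i+1}E_{i,i+2}$ to absorb the first summand of $h(\alpha,i)$ at the operator level, expand via the Leibniz rule for matrix units on wedges, and kill the remaining residue by pairing the symmetry of $\partial_s\partial_k$ against the antisymmetry of $\varepsilon_k\wedge\varepsilon_s$. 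This is more structural and avoids the seven-case inspection, which is a real improvement. One small bookkeeping slip to note: the $-\delta_{s,i+1}E_{i,i+2}$ correction produces the negative of the \emph{entire} first summand (head and tail), not just its tail, and the stray $-$head term it creates is in fact absorbed by the $\delta_{k,i+2}\delta_{s,i+1}$ contribution coming from the double Leibniz expansion of $E_{i,i+1}E_{s,i+2}(\varepsilon_k\wedge w)$, which your account omits. Since those two omitted pieces cancel each other, the residue you write down and its vanishing are exactly right, so the proof stands.
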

\begin{proof}
	Take any $\sum_{l=1}^{n}\partial_lp\otimes E_{lj}w\in L_n(P,r)$
	with $w=\varepsilon_{j_1}\land \varepsilon_{j_2}\land\cdots\land\varepsilon_{j_r}$
	for some distinct $1\leq j_1=j,j_2,\cdots,j_r\leq n$.
	We have 
	\begin{equation}\label{eq15}
		\begin{aligned}
h(\alpha ,i)\left( \sum_{l=1}^n{\partial _lp\otimes E_{lj}w} \right) =&\sum_{l=1}^n{t^{\alpha +e_{i+1}+e_{i+2}-e_i}\partial _{i+1}\partial _lp\otimes E_{i,i+2}E_{lj}w}
\\
&-\sum_{l=1}^n{t^{\alpha +e_{i+1}+e_{i+2}-e_i}\partial _{i+2}\partial _lp\otimes E_{i,i+1}E_{lj}w}
\\
&+\sum_{l,s=1}^n{t^{\alpha +e_{i+1}+e_{i+2}-e_i}\partial _s\partial _lp\otimes E_{s,i+2}E_{i,i+1}E_{lj}w}
.
		\end{aligned}
	\end{equation}

	Let $\beta=\alpha+e_{i+1}+e_{i+2}-e_i$.
	The term involving $t^{\beta}\partial_{i+1}^2p$ in (\ref{eq15}) is
	$$t^{\beta}\partial_{i+1}^2p\otimes(E_{i,i+2}E_{i+1,j}+E_{i+1,i+2}E_{i,i+1}E_{i+1,j})w=0.$$
	The term involving $t^{\beta}\partial_{i+2}^2p$ in (\ref{eq15}) is
	$$t^{\beta}\partial_{i+2}^2p\otimes(E_{i+2,i+2}E_{i,i+1}E_{i+2,j}-E_{i,i+1}E_{i+2,j})w=0.$$
	The term involving $t^{\beta}\partial_{i+1}\partial_{i+2}p$ in (\ref{eq15}) is
	$$t^{\beta}\partial_{i+1}\partial_{i+2}p\otimes(E_{i,i+2}E_{i+2,j}-E_{i,i+1}E_{i+1,j}+E_{i+1,i+2}E_{i,i+1}E_{i+2,j}+E_{i+2,i+2}E_{i,i+1}E_{i+1,j})w=0.$$
	The term involving $t^{\beta}\partial_l\partial_{i+1}p$ in (\ref{eq15}) for $l\neq i+1,i+2$ is
	$$t^{\beta}\partial_l\partial_{i+1}p\otimes(E_{i,i+2}E_{lj}+E_{i+1,i+2}E_{i,i+1}E_{l,j}+E_{l,i+2}E_{i,i+1}E_{i+1,j})w=0.$$
	The term involving$t^{\beta}\partial_l\partial_{i+2}p$ in (\ref{eq15}) for $l\neq i+1,i+2$ is
	$$t^{\beta}\partial_l\partial_{i+2}p\otimes(-E_{i,i+1}E_{lj}+E_{i+2,i+2}E_{i,i+1}E_{l,j}+E_{l,i+2}E_{i,i+1}E_{i+2,j})w=0.$$
	The term involving $t^{\beta}\partial_l^2p$ in (\ref{eq15}) for $l\neq i+1,i+2$ is
	$$t^{\beta}\partial_l^2p\otimes E_{l,i+2}E_{i,i+1}E_{l,j}w=0.$$
	The term involving $t^{\beta}\partial_l\partial_{s}p$ in (\ref{eq15}) 
	for $l\neq i+1,i+2$ and $s\neq i+1,i+2$ is
	$$t^{\beta}\partial_l\partial_{s}p\otimes(E_{s,i+2}
	E_{i,i+1}E_{lj}+E_{l,i+2}E_{i,i+1}E_{s,j})w=0.$$

	Hence the right-hand side of (\ref{eq15}) is zero, as desired.
\end{proof}

\begin{lemma}\label{l6}
	Let $n\geq 3$, $2\leq r\leq n-1$, $\alpha\geq 2e_i-e_{i+2}$ 
	and $P$ be a simple $D_n$-module. 
	If $N$ is a $S_n$-submodule of $L_n(P,r)$, we have 
	$$\left(\sum_{s=1}^n{\partial _st^{\alpha +e_{i+1}+e_{i+2}-e_i}\otimes 
	E_{s,i+2}E_{i,i+1}}\right)N\subseteq N.$$
\end{lemma}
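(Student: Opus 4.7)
The plan is simply to assemble the three ingredients laid out just before the lemma, nothing more. First, the explicit identity (\ref{eq10}) expresses $g(\alpha,i)$ as a $\mathbb{C}$-linear combination of products $\hat{\iota}(L_{i,i+2}^{\alpha-ke_i})\cdot\hat{\iota}(L_{i,i+1}^{ke_i})$ for $k=-1,0,1,2,3$. Under the hypothesis $\alpha\geq 2e_i-e_{i+2}$, each exponent $\alpha-ke_i$ satisfies $\alpha-ke_i\geq -e_i-e_{i+2}$, so every $L_{i,i+2}^{\alpha-ke_i}$ lies in $S_n$; and $L_{i,i+1}^{ke_i}$ is trivially in $S_n$. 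Therefore $g(\alpha,i)\in\iota(U(S_n))$, and since $N$ is an $S_n$-submodule we get $g(\alpha,i)\,N\subseteq N$.

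Next I would translate the action of $g(\alpha,i)$ on $N$ into the target operator. By Lemma \ref{l3}, $g(\alpha,i)$ and $u(\alpha,i)$ act identically on $F(P,V(\delta_r))$, hence in particular on $N\subseteq L(P,r)$. By the decomposition recorded between Lemmas \ref{l4} and \ref{l6},
$$u(\alpha,i)=h(\alpha,i)-\sum_{s=1}^{n}\partial_s t^{\alpha+e_{i+1}+e_{i+2}-e_i}\otimes E_{s,i+2}E_{i,i+1}.$$
Denote the subtracted sum by $T(\alpha,i)$. Lemma \ref{l4} asserts $h(\alpha,i)\,L(P,r)=0$, so $h(\alpha,i)\,x=0$ for every $x\in N$. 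For such $x$ we therefore obtain
$$g(\alpha,i)\,x=u(\alpha,i)\,x=h(\alpha,i)\,x-T(\alpha,i)\,x=-T(\alpha,i)\,x.$$

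Since the left-hand side lies in $N$ by the first step, so does $T(\alpha,i)\,x$, which is exactly the statement of the lemma. There is no genuine obstacle here; the real work was spent in the calculations yielding the identity (\ref{eq10}) and in Lemmas \ref{l3}–\ref{l4}. The only thing to be checked along the way is the exponent bookkeeping showing $g(\alpha,i)\in\iota(U(S_n))$, which is immediate from $\alpha\geq 2e_i-e_{i+2}$.
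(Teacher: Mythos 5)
Your argument is correct and is essentially the paper's own proof: you invoke $g(\alpha,i)\in\iota(U(S_n))$ (justified by (\ref{eq10}) and the hypothesis $\alpha\geq 2e_i-e_{i+2}$), use Lemma \ref{l3} to replace $g(\alpha,i)$ by $u(\alpha,i)$ on $L(P,r)$, and use Lemma \ref{l4} together with the decomposition $u(\alpha,i)=h(\alpha,i)-T(\alpha,i)$ to conclude $T(\alpha,i)y=-g(\alpha,i)y\in N$. The only difference is that you spell out the exponent bookkeeping for $L_{i,i+2}^{\alpha-ke_i}$ and $L_{i,i+1}^{ke_i}$ lying in $S_n$, which the paper leaves implicit.
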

\begin{proof}
	Note that $$
 \sum_{s=1}^n{\partial _st^{\alpha +e_{i+1}+e_{i+2}-e_i}\otimes E_{s,i+2}E_{i,i+1}}
 =h(\alpha,i)-u(\alpha,i)$$
and $g(\alpha,i)\in\iota(U(S_n))$.
	Then from Lemma \ref{l3} and Lemma \ref{l4}, we have 
	$$\left(\sum_{s=1}^n{\partial _st^{\alpha +e_{i+1}+e_{i+2}-e_i}\otimes 
	E_{s,i+2}E_{i,i+1}}\right)y=-g(\alpha,i)y\in N$$
	for any $y\in N$.
\end{proof}

Now we give the following result.
\begin{proposition}\label{pro58}
	Let $n\geq 3$, $2\leq r\leq n-1$ and $P$ be a simple $D_n$-module.
	The following statements hold.
	
	(a) $L_n(P,r)$
	is a simple $S_n$-submodule of $F(P,V(\delta_r))$.

	(b) If $r\ne n-1$, $F(P,V(\delta_r))/\widetilde{L_n}(P,r)
	\cong L_n(P,r+1)$ is a simple $S_n$-module.

	(c) $F(A_n,V(\delta_{n-1}))/\widetilde{L_n}(A_n,n-1)\cong A_n$ 
	has a unique simple $S_n$-quotient $A_n/\mathbb{C}t^0$.

	(d) If $P\ncong A_n$, $F(P,V(\delta_{n-1}))/\widetilde{L_n}(P,{n-1})\cong 
	\Delta P$ is a simple $S_n$-module.

\end{proposition}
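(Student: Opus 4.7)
My plan is to dispatch parts (b), (c), and (d) by invoking the first isomorphism theorem for the map $\pi_r$, using that $\widetilde{L_n}(P,r) = \ker \pi_r$ (Lemma \ref{l430}(a)); each then reduces either to part (a) or to Proposition \ref{pro1}. Part (a) is the substantive claim and requires the machinery of Lemmas \ref{l3}, \ref{l4}, and \ref{l6}.

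For part (b), since $\widetilde{L_n}(P,r) = \ker \pi_r$ and $\mathrm{Im}(\pi_r) = L_n(P,r+1)$ by definition, the first isomorphism theorem yields an $S_n$-isomorphism $F(P,V(\delta_r))/\widetilde{L_n}(P,r) \cong L_n(P,r+1)$. Since $r \neq n-1$, we have $2 \leq r+1 \leq n-1$, so simplicity follows from part (a) applied to $r+1$ in place of $r$.

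For parts (c) and (d), the key observation is that $V(\delta_n,n)|_{\mathfrak{sl}_n} = V(\delta_n)$ is the trivial $\mathfrak{sl}_n$-module (since $\delta_n = 0$ by convention); combined with $\iota(S_n) \subseteq D_n \otimes U(\mathfrak{sl}_n)$, this yields an $S_n$-isomorphism $F(P,V(\delta_n,n)) \cong P$ via $p \otimes (\varepsilon_1 \wedge \cdots \wedge \varepsilon_n) \mapsto p$. Under this identification, $L_n(P,n) = \pi_{n-1}(F(P,V(\delta_{n-1})))$ corresponds to $\Delta P$, so $F(P,V(\delta_{n-1}))/\widetilde{L_n}(P,n-1) \cong \Delta P$. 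When $P \cong A_n$, one checks $\Delta A_n = A_n$ (using $\partial_i(t_i t^{\beta}) = (1+\beta_i) t^{\beta}$ to see each $\partial_i$ is surjective on $A_n$), and Proposition \ref{pro1}(b) yields (c). When $P \ncong A_n$, Proposition \ref{pro1}(a) says $\Delta P$ is simple, giving (d).

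For part (a), let $N$ be a nonzero $S_n$-submodule of $L_n(P,r)$ and pick $0 \neq y \in N$. The plan is to combine two sources of operators preserving $N$: the full $S_n$-action by hypothesis, including in particular $\iota(\partial_j) = \partial_j \otimes 1$ (coming from $\partial_j = -L_{ij}^{-e_j} \in S_n$), which lets us freely apply polynomials in $\partial_1, \ldots, \partial_n$ to the $P$-factor; and the crucial operators $\sum_s \partial_s t^{\beta} \otimes E_{s,i+2} E_{i,i+1}$ from Lemma \ref{l6}, which give a $\partial$-twisted version of the $\mathfrak{sl}_n$-action that stays inside $L_n(P,r)$. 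Using these together with the density theorem for the simple $D_n$-module $P$ and the simplicity of $V(\delta_r)$ as an $\mathfrak{sl}_n$-module, I aim to generate all of $L_n(P,r)$ from $y$. The principal obstacle will be combinatorial: showing that these operators suffice to sweep across the entire $\mathfrak{sl}_n$-module $V(\delta_r)$ while all intermediate vectors remain constrained to lie in $L_n(P,r)$ rather than escaping into the larger $F(P,V(\delta_r))$.
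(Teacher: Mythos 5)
Your handling of parts (b), (c), and (d) is correct and matches the paper's route: $\widetilde{L_n}(P,r) = \ker\pi_r$ (Lemma \ref{l430}(a)) plus the first isomorphism theorem gives (b) from (a), and identifying $F(P,V(\delta_n,n))$ with $P$ as an $S_n$-module (so that $L_n(P,n)\cong\Delta P$) reduces (c) and (d) to Proposition \ref{pro1}. For part (a), however, you have only written a plan, and the ``principal obstacle'' you flag at the end --- keeping the intermediate vectors inside $L_n(P,r)$ --- is precisely the crux of the argument; you do not resolve it.

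The missing idea is to parameterize $L_n(P,r)$ by $V(\delta_{r-1})$, not $V(\delta_r)$. Since $L_n(P,r)=\pi_{r-1}\bigl(P\otimes V(\delta_{r-1})\bigr)$, the paper first uses repeated actions of $t_{q+1}\partial_q$ to arrange that a minimal-weight component of some $v_{j_0}$ is the lowest-weight vector of $V(\delta_r)$ (so that $E_{n-r,n-r+1}v_{j_0}\ne 0$), then applies $\partial^\gamma\otimes 1$, the Lemma \ref{l6} operator with $i=n-r$, and the density theorem to isolate $j_0$; the resulting element is then recognized, after a short computation, as $\pi_{r-1}\bigl(t^{e_{n-r}+e_{n-r+1}}p\otimes w_0\bigr)$ for a fixed nonzero $w_0=E_{n-r,n-r+1}w\in V(\delta_{r-1})$ and all $p\in P$. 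After clearing the $t$-factor with $\partial_{n-r}$ and $\partial_{n-r+1}$, the set $V:=\{w\in V(\delta_{r-1})\mid \pi_{r-1}(p\otimes w)\in N,\ \forall p\in P\}$ is shown to be a nonzero $\mathfrak{sl}_n$-submodule of $V(\delta_{r-1})$ (the simple module you need is $V(\delta_{r-1})$, not $V(\delta_r)$ as you wrote), forcing $V=V(\delta_{r-1})$ and $N=L_n(P,r)$. The issue of ``escaping'' $L_n(P,r)$ is sidestepped because everything is by construction in the image of $\pi_{r-1}$. Your sketch omits both the lowest-weight reduction and, more importantly, this transfer to $V(\delta_{r-1})$, without which the combinatorics you anticipate do not close.
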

\begin{proof}
Suppose that $N$ is a nonzero $S_n$-submodule of $L_n(P,r)$.
   Fix a nonzero $y=\sum_{j\in J}p_j\otimes v_j\in N$, where
   $J$ is a finite index set, all $v_j\in V(\delta_r)$, $j\in J$, are nonzero and
   $p_j\in P$, $j\in J$ are linearly independent.
   Let $v$ be a nonzero weight component which has minimal weight
   among all homogeneous components of all $v_j$, $j\in J$.

   \textbf{Claim 1 } We can choose $y$ such that 
   $v\in\mathbb{C}\varepsilon _{n-r+1}\land \cdots \land \varepsilon _n$.

   If $v\notin\mathbb{C}\varepsilon _{n-r+1}\land \cdots \land \varepsilon _n$, i.e., 
   the weight of $v$ is not $\delta_n-\delta_{n-r}$, the lowest weight of
   $V(\delta_r)$, then there exists $1\leq q\leq n-1$ such that
   $E_{q+1,q}v$ is nonzero, and has lower weight.
   Since
$$t_{q+1}\partial _q\cdot \sum_{j\in J}^{}{p_j\otimes v_j}=
\sum_{j\in J}^{}{\left( t_{q+1}\partial _qp_j\otimes v_j+p_j\otimes E_{q+1,q}v_j \right)}
,$$
we see that there exists some $j\in J$ such that
$E_{q+1,q}v$ is a nonzero component of $E_{q+1,q}v_j$ with weight lower than
that of $v$ and that $p_j\otimes E_{q+1,q}v$ can not be canceled by other
summands. Replacing $\sum_{j\in J}^{}{p_j\otimes v_j}$ with
$t_{q+1}\partial _q\cdot \sum_{j\in J}^{}{p_j\otimes v_j}\ne 0$
and repeating this process several times, we may assume that
the weight of $v$ is $\delta_n-\delta_{n-r}$, that is,
$v\in\mathbb{C}\varepsilon _{n-r+1}\land \cdots \land \varepsilon _n$.
Claim 1 follows.

Assume that $v$ is a nonzero weight component of some $v_{j_0}$, $j_0\in J$.
Then $E_{n-r,n-r+1}v_{j_0}\ne 0$ by Claim 1.

{\bf Claim 2 } 
There exists some $0\ne w_0\in V(\delta_{r-1})$ such that
$\pi_{r-1}(p\otimes w_0)\in N$
for all $p\in P$.

Since $y=\sum_{j\in J}p_j\otimes v_j\in N$
and $\iota(\partial_l)=\partial_l\otimes 1$ for all $1\leq l\leq n$,
we see 
$\sum_{j\in J}\partial_lp_j\otimes v_j\in N$
for all $1\leq l\leq n$. Hence, we have 
$\sum_{j\in J}\partial^\gamma p_j\otimes v_j\in N$
for any $\gamma\in\mathbb{Z}_+^n$.
For any $1\leq i\leq n-2$, by Lemma \ref{l6}, we have
\begin{equation*}
    \sum_{j\in J}{\sum_{s=1}^n{\partial _st^{\beta +e_{i}+e_{i+1}}
    \partial^\gamma p_j\otimes E_{s,i+2}E_{i,i+1}v_j}}\in N
\end{equation*}
for all $\beta,\gamma\in\mathbb{Z}_+^n$.
That is
\begin{equation}\label{com2}
    \sum_{j\in J}{\sum_{s=1}^n{\partial _st^{e_{i}+e_{i+1}}
    zp_j\otimes E_{s,i+2}E_{i,i+1}v_j}}\in N
\end{equation}
for all $z\in D_n$.

Note that all $p_j$, $j\in J$, are linearly independent. 
By the density theorem in ring theory, for any $p\in P$, 
we can find some $z\in D_{n}$ such that
$zp_{j_0}=p$ and $zp_j=0$ for all $j\ne j_0$.
It follows from (\ref{com2}) that
$$\sum_{s=1}^n{\partial _st^{e_{i}+e_{i+1}}p\otimes E_{s,i+2}E_{i,i+1}v_{j_0}}\in N
$$
for all $p\in P$.
Since $n\geq 3$ and $2\leq r\leq n-1$, we have $1\leq n-r\leq n-2$.
Taking $i=n-r$, we get
\begin{equation}\label{eq57}
	\sum_{s=1}^n{\partial _st^{ e_{n-r}+e_{n-r+1}}p\otimes E_{s,n-r+2}E_{n-r,n-r+1}v_{j_0}}
\in N
\end{equation}
for all $p\in P$. We write $v_{j_0}=\varepsilon_{n-r+2}\land w+v_{j_0}^\prime$,
where $w\in \bigwedge^{r-1}V^\prime$, $v_{j_0}^\prime\in\bigwedge^rV^\prime$
and $V^\prime=
\mathrm{span}\{\varepsilon_1,\cdots,\varepsilon_{n-r+1},
\varepsilon_{n-r+3},\cdots,\varepsilon_n\}$.
Then $w_0=E_{n-r,n-r+1}w\ne 0$ and
\begin{align*}
	&\sum_{s=1}^n{\partial _st^{ e_{n-r}+e_{n-r+1}}p\otimes E_{s,n-r+2}E_{n-r,n-r+1}v_{j_0}}\\
=&\sum_{s=1}^n{\partial _st^{ e_{n-r}+e_{n-r+1}}p\otimes 
E_{s,n-r+2}E_{n-r,n-r+1}(\varepsilon_{n-r+2}\land w)}\\
=&\sum_{s=1}^n{\partial _st^{ e_{n-r}+e_{n-r+1}}p\otimes 
(\varepsilon_{s}\land E_{n-r,n-r+1}w)}\\
=&\pi_{r-1}(t^{ e_{n-r}+e_{n-r+1}}p\otimes w_0)
\in N
\end{align*}
for all $p\in P$.

From
$$\partial _{n-r}\cdot\pi_{r-1} ( t^{e_{n-r}+e_{n-r+1}}p\otimes w_0  )
=\pi_{r-1}(t^{e_{n-r}+e_{n-r+1}}\partial _{n-r}p\otimes w_0)
+\pi_{r-1}(t^{e_{n-r+1}}p\otimes w_0)\in N,$$
we have $\pi_{r-1}(t^{e_{n-r+1}}p\otimes w_0)
\in N$ for all $p\in P$.
Similarly, from 
$\partial _{n-r+1}\cdot \pi_{r-1}\left( t^{e_{n-r+1}}p\otimes w_0 \right)\in N $,
we obtain that 
$\pi_{r-1}(p\otimes w_0)\in N$ for all $p\in P$.
Claim 2 follows.

Let $V:=\{w\in V(\delta_{r-1})|\pi_{r-1}(p\otimes w)\in N,\forall p\in P\}$
be a subspace of $V(\delta_{r-1})$.
From Claim 2, we see that
$V\ne 0$ .

Take any $w\in V$, $p\in P$ and $m,k=1,2,\cdots,n$ with
$m\ne k$, we have
\begin{align*}
t_m\partial _k\cdot \pi_{r-1}\left( p\otimes w \right) 
=&\pi_{r-1}(t_m\partial _k\cdot(p\otimes w))\\
=&\pi_{r-1}(t_m\partial _kp\otimes w+p\otimes E_{mk}w)\\
=&\pi_{r-1}\left( t_m\partial _kp\otimes w \right) +\pi_{r-1}(p\otimes E_{mk}w)
\in N
.
\end{align*}
Thus $\pi_{r-1}(p\otimes E_{mk}w)\in N$ for any $p\in P$. Hence, $E_{mk}w\in V$.
This shows that $V$ is a $\mathfrak{sl}_n$-submodule of $V(\delta_{r-1})$, forcing
$V=V(\delta_{r-1})$. Then we obtain that
$L_n(P,r)=\pi_{r-1}(P\otimes V(\delta_{r-1}))\subseteq N$,
which implies that $N=L_n(P,r)$ and completes the proof of (a).

If $r\ne n-1$,
we have $F(P,V(\delta_r))/\widetilde{L_n}(P,r)\cong L_n(P,r+1)$,
which is simple by (a). Now (b) follows.

If $r=n-1$, we have 
$$F(P,V(\delta_{n-1}))/\widetilde{L_n}(P,n-1)\cong L_n(P,n)\cong\Delta P.$$
The last isomorphism follows from the definition of $\pi_{n-1}$.
Now (c) and (d) follow from Proposition \ref{pro1}.
\end{proof}

Now we summarize the results obtained regarding $S_n$-modules 
$F(P,\delta_r)$, $0\leq r\leq n-1$,
as follows.
\begin{theorem}\label{t2}
	Let $P$ be a simple $D_n$-module. The following statements hold.
		
	(a) If $P\ncong A_n$, 
	then $F(P,V(\delta_0))=P$ is simple if and only if $\Delta P=P$.
	In non-simple cases, $F(P,V(\delta_0))=P$ has a unique
	simple submodule $\Delta P$ and the quotient $P/\Delta P$ is trivial.
	
	(b) $F(A_n,V(\delta_0))=A_n$ has a unique nonzero proper submodule $\mathbb{C}t^0$
	and thus has a unique simple quotient $A_n/\mathbb{C}t^0$.

	(c) $F(P,V(\delta_1))$ is not simple and
	it has a nonzero proper submodule $L_n(P,1)$.
	If $P\ncong A_n$, we have $L_n(P,1)\cong F(P,V(\delta_0))$.
	In addition, $L_n(A_n,1)\cong A_n/\mathbb{C}t^0$ is simple.
	
	(d) The quotient $F(P,V(\delta_1))/\widetilde{L_n}(P,1)\cong L_n(P,2)$
	is simple unless $n=2$ and $P\cong A_2$.
	In addition, $F(A_2,V(\delta_1))/\widetilde{L_2}(A_2,1)\cong A_2$ 
	has a unique simple quotient $A_2/\mathbb{C}t^0$.

	(e) For $n\geq 3$ and $2\leq r\leq n-1$,
	$F(P,V(\delta_r))$ is not simple and 
	it has a simple submodule $L_n(P,r)$.
	
	(f) For $n\geq 3$ and $2\leq r\leq n-2$,
	the quotient $F(P,V(\delta_r))/\widetilde{L_n}(P,r)\cong L_n(P,r+1)$ 
	is simple.

	(g) For $n\geq 3$,
	the quotient $F(P,V(\delta_{n-1}))/\widetilde{L_n}(P,{n-1})\cong \Delta P$
	is simple if $P\ncong A_n$.
	In addition, the quotient $F(A_n,V(\delta_{n-1}))/\widetilde{L_n}(A_n,n-1)\cong A_n$
	has a unique simple quotient $A_n/\mathbb{C}t^0$.
\end{theorem}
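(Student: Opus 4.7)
The plan is to observe that Theorem \ref{t2} is essentially a packaging result: every statement in (a)--(g) is a direct consequence of Propositions \ref{pro1}, \ref{pro57}, and \ref{pro58}, combined with the general facts about $L_n(P,r)$ and $\widetilde{L_n}(P,r)$ recorded in Lemma \ref{l430}. My proof would therefore proceed by going through the parts (a)--(g) in order and citing the relevant earlier result, adding a short argument only where a genuine reorganization is needed.

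Parts (a) and (b) are restatements of Proposition \ref{pro1}(a) and (b), with the extra observation in (a) that simplicity of $F(P,V(\delta_0))$ as an $S_n$-module is equivalent to $\Delta P = P$, since by Proposition \ref{pro1}(a) $\Delta P$ is the unique nonzero simple submodule and $P/\Delta P$ is trivial. Part (c) combines Lemma \ref{l430}(b), (c) at $r=1$ (giving properness of $L_n(P,1)$ and non-simplicity of $F(P,V(\delta_1))$) with Proposition \ref{pro57} (supplying the identification of $L_n(P,1)$ with $F(P,V(\delta_0))$ or $A_n/\mathbb{C}t^0$).

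Part (d) is the one that needs the most attention. Using Lemma \ref{l430}(a), the quotient $F(P,V(\delta_1))/\widetilde{L_n}(P,1)$ is naturally isomorphic via $\pi_1$ to $L_n(P,2)$. For $n\geq 3$, simplicity of $L_n(P,2)$ is Proposition \ref{pro58}(a) with $r=2$. For $n=2$ the module $V(\delta_2)$ is the trivial one-dimensional $\mathfrak{sl}_2$-module, so $\pi_1$ lands in an $S_2$-module isomorphic to $P$; unravelling the definition of $\pi_1$ identifies its image with $\Delta P\subseteq P$, and the conclusion then follows from Proposition \ref{pro1}: simple when $P\ncong A_2$, and having unique simple quotient $A_2/\mathbb{C}t^0$ when $P\cong A_2$.

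For parts (e), (f), (g), I would invoke Proposition \ref{pro58} directly: (e) is Proposition \ref{pro58}(a) together with Lemma \ref{l430}(b), (c); (f) follows by using Lemma \ref{l430}(a) to rewrite the quotient as $L_n(P,r+1)$ and then applying Proposition \ref{pro58}(b); and (g) splits according to whether $P\cong A_n$, appealing to Proposition \ref{pro58}(c) or (d), combined with the identification $L_n(P,n)\cong \Delta P$ coming from the definition of $\pi_{n-1}$ (the codomain being a $P$-copy). The only step that involves anything beyond direct citation is the $n=2$ subcase of (d), but this reduces to unravelling $\pi_{n-1}$ at $n=2$; I do not anticipate a genuine obstacle.
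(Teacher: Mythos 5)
Your proposal is correct and follows essentially the same route as the paper: the paper's proof of Theorem \ref{t2} is precisely the assembly of Propositions \ref{pro1}, \ref{pro57}, and \ref{pro58} together with Lemma \ref{l430}, including the special treatment of the $n=2$ case of (d) via the identification $L_2(P,2)\cong\Delta P$. The only difference is that you spell out the derivation of $L_n(P,n)\cong\Delta P$ by unravelling $\pi_{n-1}$ (which the paper asserts without detail), but this is the same argument, just made explicit.
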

\begin{proof}
	(a) and (b) follow from Proposition \ref{pro1}. (c) follows from Proposition \ref{pro57}. 
	If $n>2$, the module $L_n(P,2)$ is simple by
	Proposition \ref{pro58}(a). If $n=2$, $L_2(P,2)\cong \Delta P$.
	Now (d) follows from Proposition \ref{pro1}.
	Finally, (e), (f) and (g) follow from Proposition \ref{pro58}.
\end{proof}

\section{Example: Weight modules}

In this section, 
we study the $S_n$-modules $F(P,M)$ in more detail under the assumption that both $P$ and $M$
are weight modules.
By Theorem \ref{t1}, if $M\ncong V(\delta_r)$ as $\mathfrak{sl}_n$-module
for all $r=0,1,\cdots,n$,
we know that $F(P,M)$ is simple as $S_n$-module. 
It remains to determine all nontrivial simple
$S_n$-subquotients of $F(P,V(\delta_r))$ for all $0\leq r\leq n-1$.

A weight $W_n$-module is bounded if the dimensions of its weight spaces
are uniformly bounded by a constant positive integer.
Recall that following lemma from \cite{XL}.
\begin{lemma}[({\cite[Lemma 3.8]{XL}})]\label{l58}
	Let $P$ be a simple weight $D_n$-module
and $M$ be a simple weight $\mathfrak{gl}_n$-module.
Then $F(P,M)$ is a bounded $W_n$-module if and only if
$M$ is finite-dimensional.
\end{lemma}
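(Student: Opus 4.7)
The plan is to analyze the $W_n$-weight multiplicities of $F(P,M)$ directly. Since $\iota(d_i) = d_i \otimes 1 + 1 \otimes E_{ii}$, a pure tensor $p \otimes v$ with $p \in P_\mu$ and $v \in M_\nu$ has $W_n$-weight $\mu + \nu$, and Lemma \ref{l2}(\rmnum{2}) gives $\dim P_\mu = 1$ for every $\mu \in \mathrm{supp}(P)$; hence
\begin{equation*}
\dim F(P,M)_\lambda \;=\; \sum_{\substack{\nu \in \mathrm{supp}(M) \\ \lambda - \nu \in \mathrm{supp}(P)}} \dim M_\nu.
\end{equation*}
The ``if'' direction is then immediate: when $\dim M < \infty$, the sum is bounded above by $\sum_{\nu} \dim M_\nu = \dim M$, uniformly in $\lambda$.

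For the ``only if'' direction I would argue contrapositively. If some weight space $M_{\nu_0}$ is infinite-dimensional, I pick any $\mu_0 \in \mathrm{supp}(P)$, and then $F(P,M)_{\mu_0 + \nu_0}$ already contains $P_{\mu_0} \otimes M_{\nu_0}$ and is therefore infinite-dimensional, contradicting boundedness. Otherwise all $\dim M_\nu < \infty$, so $\dim M = \infty$ forces $|\mathrm{supp}(M)| = \infty$. Given any $N$, I fix distinct $\nu_1, \ldots, \nu_N \in \mathrm{supp}(M)$ and aim to construct a single $\lambda$ with $\lambda - \nu_k \in \mathrm{supp}(P)$ for each $k$; by the displayed formula this forces $\dim F(P,M)_\lambda \geq N$, and letting $N \to \infty$ contradicts boundedness.

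The construction of $\lambda$ uses the product structure $\mathrm{supp}(P) = X_1 \times \cdots \times X_n$ from Lemma \ref{l2}(\rmnum{2}), with each $X_i \in \{a + \mathbb{Z}, \mathbb{Z}_+, \mathbb{Z}_{<0}\}$. Since consecutive weights of a $\mathfrak{gl}_n$-module differ by roots $\epsilon_i - \epsilon_j \in \mathbb{Z}^n$, the $\nu_k$ share the same coordinate-wise fractional parts, so a single congruence on each $\lambda_i$ fixes the coset. I then choose $\lambda_i$ large enough when $X_i = \mathbb{Z}_+$, small enough when $X_i = \mathbb{Z}_{<0}$, and arbitrary in the fixed coset when $X_i = a + \mathbb{Z}$; this simultaneously puts $\lambda - \nu_k$ in $\mathrm{supp}(P)$ for every $k = 1, \ldots, N$. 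The main obstacle is precisely this alignment step: one must verify that the ``half-line'' factors $\mathbb{Z}_+$ and $\mathbb{Z}_{<0}$ are compatible with the chosen $\nu_k$'s, i.e.\ that $\lambda_i$ can dominate (or be dominated by) all of $\nu_{1,i}, \ldots, \nu_{N,i}$ at once. This is automatic since each $\{\nu_{k,i}\}_{k=1}^N$ is a finite set of integers shifted by a common fractional part, so once the matching coset is identified, pushing $\lambda_i$ far enough in the unbounded direction of $X_i$ finishes the argument.
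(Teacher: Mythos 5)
The paper does not prove this lemma; it cites it verbatim from \cite[Lemma 3.8]{XL}, so there is no in-paper argument to compare against. Your proof is correct and is the natural direct argument one would expect the cited reference to use. The essential ingredients are exactly right: $\iota(d_i) = d_i\otimes 1 + 1\otimes E_{ii}$, so the $W_n$-weight of $p\otimes v$ with $p\in P_\mu$, $v\in M_\nu$ is $\mu+\nu$; every weight space of a simple weight $D_n$-module is one-dimensional by Lemma~\ref{l2}, giving the displayed multiplicity formula; $\dim M<\infty$ then yields the uniform bound $\dim M$ immediately; and for the converse, the support of a simple $\mathfrak{gl}_n$-weight module lies in a single coset $\nu_0+Q$ of the root lattice $Q=\{\alpha\in\mathbb{Z}^n:\sum_i\alpha_i=0\}\subset\mathbb{Z}^n$, which is what makes the alignment with the product support $X_1\times\cdots\times X_n$ of $P$ possible. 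Your treatment of the half-line factors $X_i\in\{\mathbb{Z}_+,\mathbb{Z}_{<0}\}$ is correct: once the coset of $\lambda_i$ is fixed, one pushes $\lambda_i$ toward the unbounded side of $X_i$ past all finitely many $\nu_{k,i}$, which is possible precisely because the $\nu_{k,i}$ lie in a common integer coset. One small phrasing remark: the relevant fact is not about ``consecutive'' weights but simply that $M$, being simple and hence cyclic, has $\mathrm{supp}(M)\subseteq \nu_0+Q$ for any fixed $\nu_0\in\mathrm{supp}(M)$; that is what you actually use, and it suffices.
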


From Lemma \ref{l58}, we deduce that $\widetilde{L_n}(P,r)/L_n(P,r)$ is a 
finite-dimensional trivial module, where $P$ is a simple weight $D_n$-module
and $r=0,1,\cdots,n$. In the following discussion, we will often use this statement.

\begin{proposition}\label{p1}
	Let $P$ be a simple weight $D_n$-module. The following statements hold.

	(a) $F(P,V(\delta_0))=P$ is simple, 
	where $P\ncong A_n$ and $P\ncong A_n^F$.

	(b) $F(A_n,V(\delta_0))=A_n$ has a unique nontrivial irreducible subquotient
	$A_n/\mathbb{C}t^0$.

	(c) $F(A_n^F,V(\delta_0))=A_n^F$ has a unique nontrivial irreducible subquotient
	$\Delta F(A_n^F,V(\delta_0))=\Delta A_n^F$.
\end{proposition}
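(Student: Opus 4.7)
The plan is to deduce all three parts from Theorem~\ref{t2} combined with the tensor-product structure of simple weight $D_n$-modules in Lemma~\ref{l2}. The main task is to decide, for a simple weight $D_n$-module $P$, when $\Delta P$ equals $P$, where $\Delta P = \sum_{i=1}^{n}\partial_i P$ is the $S_n$-submodule introduced above Proposition~\ref{pro1}.

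First I would write $P \cong V_1\otimes\cdots\otimes V_n$ by Lemma~\ref{l2}(ii), so that $\partial_i$ acts only on the $i$-th tensor factor and $\partial_i P = V_1\otimes\cdots\otimes(\partial_i V_i)\otimes\cdots\otimes V_n$. Hence $\Delta P = P$ whenever $\partial_i V_i = V_i$ for at least one index $i$. A short case analysis of Lemma~\ref{l2}(i) shows that $\partial_i V_i = V_i$ unless $V_i \cong A_{(i)}^{F_{(i)}}$: for $V_i = t_i^{\lambda_i}\mathbb{C}[t_i^{\pm}]$ with $\lambda_i \notin \mathbb{Z}$, every scalar $\lambda_i+k$ is nonzero so $\partial_i$ is surjective; for $V_i = A_{(i)}$, one has $\partial_i t_i^{k+1} = (k+1)t_i^k$ for $k\geq 0$, again surjective; and for $V_i = A_{(i)}^{F_{(i)}}$, realized as $\mathbb{C}[t_i^{\pm}]/\mathbb{C}[t_i]$, the image $\partial_i V_i$ omits the class of $t_i^{-1}$ and is a codimension-one subspace. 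Consequently $\Delta P = P$ iff some $V_i \ncong A_{(i)}^{F_{(i)}}$, i.e.\ precisely when $P \ncong A_n^F$.

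Part (a) then follows immediately: the hypotheses $P\ncong A_n$ and $P\ncong A_n^F$ give $\Delta P = P$, and Theorem~\ref{t2}(a) yields simplicity of $F(P,V(\delta_0)) = P$. For part (b), Theorem~\ref{t2}(b) furnishes the submodule chain $0\subset \mathbb{C}t^0\subset A_n$ with trivial bottom piece, and I would observe that the simple quotient $A_n/\mathbb{C}t^0$ is non-trivial since it still carries infinitely many distinct $d_i$-weights (those of $t^\alpha$ for $\alpha\in\mathbb{Z}_+^n\setminus\{0\}$), so it cannot be the one-dimensional trivial $S_n$-module. For part (c), the case analysis above gives $\Delta A_n^F\subsetneq A_n^F$ of codimension one, so Theorem~\ref{t2}(a) yields the chain $0\subset \Delta A_n^F\subset A_n^F$ with trivial top piece; the same weight argument shows $\Delta A_n^F$ is non-trivial (its weight support is still infinite).

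The only genuinely computational step is the case check that $\partial_i V_i$ is proper in $V_i$ exactly when $V_i\cong A_{(i)}^{F_{(i)}}$; everything else is a direct appeal to Theorem~\ref{t2}, so I do not anticipate a serious obstacle.
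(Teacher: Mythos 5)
Your proof is correct and matches the paper's approach exactly: reduce via Lemma~\ref{l2} to showing $\Delta P = P$ if and only if $P \ncong A_n^F$, then invoke Theorem~\ref{t2}(a)(b). One small point worth tightening: your case analysis directly establishes only the ``if'' direction of that equivalence (some $\partial_i V_i = V_i \Rightarrow \Delta P = P$), whereas part~(c) needs the converse, namely $\Delta A_n^F \subsetneq A_n^F$; to close this, note that when every $V_i \cong A_{(i)}^{F_{(i)}}$ the sum $\sum_i \partial_i P$ misses the class of $t_1^{-1}\cdots t_n^{-1}$ in $P\cong \bigotimes_i \mathbb{C}[t_i^{\pm}]/\mathbb{C}[t_i]$, so $\Delta A_n^F$ is indeed proper (of codimension one), exactly as you assert.
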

\begin{proof}
	By Lemma \ref{l2},
	$\Delta P=P$ if and only if
	$P\ncong A_n^F$. 
	Now the statements follow
	from Theorem \ref{t2}(a)(b).
\end{proof}

\begin{proposition}\label{p2}
	Let $P$ be a simple weight $D_n$-module.
	The following
statements hold.

	(a) For $n\geq 3$, $P\ncong A_n$ and $P\ncong A_n^F$, 
	the nontrivial irreducible subquotients of $F(P,V(\delta_1))$
	are $F(P,V(\delta_0))=P$ and $L_n(P,2)$.

	(b) For $n\geq 3$, the nontrivial irreducible subquotients of $F(A_n,V(\delta_1))$
	are $A_n/\mathbb{C}t^0$ and $L_n(A_n,2)$.

	(c) For $n\geq 3$, the nontrivial irreducible subquotients of $F(A_n^F,V(\delta_1))$
	are $\Delta A_n^F$ and $L_n(A_n^F,2)$.

	(d) For $n=2$, $P\ncong A_2$ and $P\ncong A_2^F$,
	 $F(P,V(\delta_1))$ has a unique 
	nontrivial irreducible subquotient
	$F(P,V(\delta_0))=P$.

	(e) $F(A_2,V(\delta_1))$ has a unique nontrivial irreducible subquotient
	$A_2/\mathbb{C}t^0$.

	(f) $F(A_2^F,V(\delta_1))$ has a unique nontrivial irreducible subquotient
	$\Delta A_2^F$.
\end{proposition}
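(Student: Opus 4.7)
The plan is to exploit the canonical three-step filtration
$$0 \;\subseteq\; L_n(P,1) \;\subseteq\; \widetilde{L_n}(P,1) \;\subseteq\; F(P,V(\delta_1))$$
coming from the de Rham complex. Any nontrivial simple subquotient of $F(P,V(\delta_1))$ must be a nontrivial simple subquotient of one of the three successive quotients $L_n(P,1)$, $\widetilde{L_n}(P,1)/L_n(P,1)$, or $F(P,V(\delta_1))/\widetilde{L_n}(P,1) \cong L_n(P,2)$. Since the middle quotient is trivial as an $S_n$-module (and, by Lemma \ref{l58}, a finite-dimensional weight module since $V(\delta_1)$ is finite-dimensional), it contributes only trivial simple subquotients and is discarded.

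For the bottom piece, Theorem \ref{t2}(c) identifies $L_n(P,1) \cong F(P,V(\delta_0)) = P$ when $P \ncong A_n$, and $L_n(A_n,1) \cong A_n/\mathbb{C}t^0$ (which is simple) when $P \cong A_n$. Feeding this into Proposition \ref{p1} extracts the nontrivial simple subquotients of the bottom piece: $P$ itself when $P \ncong A_n, A_n^F$; the simple module $A_n/\mathbb{C}t^0$ when $P \cong A_n$; and $\Delta A_n^F$ when $P \cong A_n^F$. Here I use Lemma \ref{l2} to see that among simple weight $D_n$-modules, precisely $A_n$ fails to satisfy $\Delta P = P$ from the ``first type'', and $A_n^F$ is the other exceptional case identified in Proposition \ref{p1}(c).

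For the top piece, Theorem \ref{t2}(d) gives $F(P,V(\delta_1))/\widetilde{L_n}(P,1) \cong L_n(P,2)$, which is simple when $n \geq 3$ (or $P \ncong A_2$). This immediately handles parts (a)--(c): combining with the bottom piece computation above we obtain two distinct nontrivial simple subquotients, namely the one from the bottom and $L_n(P,2)$ from the top. When $n=2$ one has instead $L_2(P,2)\cong \Delta P$, directly from the definition of $\pi_1$ (since $\bigwedge^2\mathbb{C}^{2\times 1}$ is one-dimensional), as already noted in the proof of Proposition \ref{pro58}. Applying Proposition \ref{p1} to $\Delta P$ then identifies this top piece's nontrivial simple subquotients with $P$ when $P \ncong A_2, A_2^F$, with $A_2/\mathbb{C}t^0$ when $P \cong A_2$ (via Theorem \ref{t2}(d)), and with $\Delta A_2^F$ when $P\cong A_2^F$.

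Assembling these observations yields parts (a)--(f); the only step beyond pure bookkeeping is the recognition, in the $n=2$ cases, that the nontrivial simple subquotients produced by the bottom and top pieces are actually isomorphic, so that only one distinct module survives. This coincidence is forced by the isomorphisms $L_2(P,1)\cong P$ and $L_2(P,2)\cong \Delta P$ together with Proposition \ref{p1}. The main (mild) obstacle is therefore just this matching in the $n=2$ regime; everything else is a direct quotation of results already established in Theorem \ref{t2} and Proposition \ref{p1}.
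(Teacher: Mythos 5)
Your proposal is correct and follows essentially the same route as the paper: the three-step filtration $0\subseteq L_n(P,1)\subseteq \widetilde{L_n}(P,1)\subseteq F(P,V(\delta_1))$, identification of the bottom and top pieces via Theorem \ref{t2}(c)(d), and reduction to Proposition \ref{p1} to extract the simple subquotients (including the $n=2$ coincidence of top and bottom). One small inaccuracy in your discussion: it is $A_n^F$, not $A_n$, that fails $\Delta P = P$ (one has $\Delta A_n = A_n$, and $A_n$ is exceptional rather because of the trivial submodule $\mathbb{C}t^0$), and the isomorphism $L_2(P,2)\cong\Delta P$ is recorded in the proof of Theorem \ref{t2}(d) rather than in Proposition \ref{pro58} (whose hypotheses require $n\geq 3$) --- but neither slip affects the argument since you ultimately invoke Proposition \ref{p1}, which gives the correct classification.
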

\begin{proof}
	Consider the following submodules sequence:
	$$0\subseteq L_n(P,1)\subseteq \widetilde{L_n}(P,1)
	\subseteq F(P,V(\delta_1)). $$
The quotient $\widetilde{L_n}(P,1)/L_n(P,1)$ is a finite-dimensional trivial
	module.
From Theorem \ref{t2}(c), 
$L_n(P,1)$ is isomorphic to $F(P,V(\delta_0))$ if $P\ncong A_n$, and to $A_n/\mathbb{C}t^0$ if $P\cong A_n$.
From Theorem \ref{t2}(d), 
$$F(P,V(\delta_1))/\widetilde{L_n}(P,1)\cong L_n(P,2)$$ is simple unless
$n=2$ and $P\cong A_2$.

In addition, $F(A_2,V(\delta_1))/\widetilde{L_2}(A_2,1)\cong A_2$ 
and then
there exist a $S_n$-submodule $N$ of $F(P,V(\delta_1))$ such that
$\widetilde{L_2}(A_2,1)\subseteq N\subseteq F(A_2,V(\delta_1))$,
where $N/\widetilde{L_2}(A_2,1)\cong \mathbb{C}t^0$ is trivial
and $F(A_2,V(\delta_1))/N\cong A_2/\mathbb{C}t^0$ is simple.

Now the Proposition follows from Proposition \ref{p1}.
\end{proof}

\begin{proposition}\label{p3}
	Let $n\geq 3$, $2\leq r\leq n-1$ and $P$ be a simple weight $D_n$-module.
	The following statements hold.
	
	(a) If $r\ne n-1$, the nontrivial irreducible subquotients of $F(P,V(\delta_r))$
	are $L_n(P,r)$ and $L_n(P,r+1)$.
	
	(b) If $P\ncong A_n$, the nontrivial irreducible subquotients of $F(P,V(\delta_{n-1}))$
	are $L_n(P,n-1)$ and $\Delta P$.

	(c) The nontrivial irreducible subquotients of $F(A_n,V(\delta_{n-1}))$
	are $L_n(P,n-1)$ and $A_n/\mathbb{C}t^0$.
\end{proposition}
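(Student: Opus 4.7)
The plan is to exploit the three-step filtration
$$0 \subseteq L_n(P,r) \subseteq \widetilde{L_n}(P,r) \subseteq F(P,V(\delta_r))$$
used throughout Section 3, and to identify the simple subquotients of each successive quotient by invoking Propositions \ref{pro1}, \ref{pro57} and \ref{pro58}. By Proposition \ref{pro58}(a) the bottom piece $L_n(P,r)$ is simple, contributing itself as a nontrivial factor. The middle piece $\widetilde{L_n}(P,r)/L_n(P,r)$ is trivial (as noted right after its definition), so every simple subquotient it produces is the one-dimensional trivial $S_n$-module and hence not nontrivial. What remains is to handle the top quotient $F(P,V(\delta_r))/\widetilde{L_n}(P,r)$ case by case.

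For part (a) with $r \neq n-1$, the hypothesis $2 \leq r \leq n-2$ places $r+1$ in the range where Proposition \ref{pro58}(b) applies, yielding $F(P,V(\delta_r))/\widetilde{L_n}(P,r) \cong L_n(P,r+1)$, which is simple (Proposition \ref{pro58}(a) at index $r+1$). For part (b) with $r = n-1$ and $P \ncong A_n$, Proposition \ref{pro58}(d) identifies this top quotient with $\Delta P$ and shows it is simple. For part (c) with $r = n-1$ and $P \cong A_n$, Proposition \ref{pro58}(c) identifies the quotient with the $S_n$-module $A_n$, which by Proposition \ref{pro1}(b) has the composition series $0 \subset \mathbb{C}t^0 \subset A_n$ whose only nontrivial simple subquotient is $A_n/\mathbb{C}t^0$.

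Assembling the three layers, via the standard Jordan--H\"older style observation that any simple subquotient of a finitely filtered module occurs as a simple subquotient of one of the successive factors, yields in each case exactly the claimed list of nontrivial irreducible $S_n$-subquotients of $F(P,V(\delta_r))$. The proof is therefore mostly bookkeeping; the substantive work --- simplicity of $L_n(P,r)$, triviality of $\widetilde{L_n}(P,r)/L_n(P,r)$, and the explicit identification of the top quotient --- has already been carried out earlier. I do not anticipate any genuinely new obstacle here; the only mild subtlety is to handle the $P \cong A_n$ case separately in (c), since it is the one case in which the top quotient fails to be simple and an extra (trivial) composition factor $\mathbb{C}t^0$ must be recognised and discarded.
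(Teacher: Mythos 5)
The proposal is correct and follows essentially the same route as the paper: both use the filtration $0\subseteq L_n(P,r)\subseteq\widetilde{L_n}(P,r)\subseteq F(P,V(\delta_r))$, identify the bottom factor as simple via Proposition~\ref{pro58}(a), discard the trivial middle layer, and read off the top quotient from Propositions~\ref{pro58}(b)--(d) and~\ref{pro1}(b) (which the paper bundles into Theorem~\ref{t2} before citing). The only cosmetic difference is that you cite the underlying propositions directly rather than the summary Theorem~\ref{t2}.
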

\begin{proof}
	Consider the following submodules sequence:
	$$0\subseteq L_n(P,r)\subseteq \widetilde{L_n}(P,r)\subseteq F(P,V(\delta_r)).$$

	The quotient $\widetilde{L_n}(P,r)/L_n(P,r)$ is a finite-dimensional trivial module.
	By Theorem \ref{t2}(e), $L_n(P,r)$ is simple. 
	From Theorem \ref{t2}(f)(g), the quotient $F(P,V(\delta_r))/\widetilde{L_n}(P,r)\cong L_n(P,r+1)$ 
	is simple unless $P\cong A_n$ and $r=n-1$.

	Moreover, by Theorem \ref{t2}(b), 
	$F(A_n,V(\delta_{n-1}))/\widetilde{L_n}(A_n,n-1)\cong \Delta A_n=A_n$
	has a unique submodule $\mathbb{C}t^0$.
	Hence, there exists some $S_n$-submodule $N$ of $F(A_n,V(\delta_{n-1}))$
	such that
	$\widetilde{L_n}(A_n,n-1)\subseteq N\subseteq F(A_n,V(\delta_{n-1}))$,
	where $N/\widetilde{L_n}(A_n,n-1)\cong \mathbb{C}t^0$ is 
	trivial and
	$F(A_n,V(\delta_{n-1}))/N\cong A_n/\mathbb{C}t^0$ is simple.

	Now the Proposition follows.
\end{proof}

\vspace*{2em}
\noindent\textbf{\Large Declarations}
\vspace*{1em}

\begin{minipage}{0.9\textwidth}
	\llrrlr[\textbf{Ethical Approval\quad}][Not applicable.
	]

	\llrrlr[\textbf{Funding\quad}][R. L\"u is partially 
supported by National Natural Science Foundation of China (Grant No. 12271383).
	]

	\llrrlr[\textbf{Availability of data and materials\quad}][No datasets were generated or analysed during the current study.]

	\llrrlr[\textbf{Conflict of Interest\quad}][The authors have no conflict of interest to declare that are relevant to this article.
	]

	\llrrlr[\textbf{Competing Interests\quad}][The authors declare no competing interests.
	]

\end{minipage}

\phantomsection
\addcontentsline{toc}{section}{References}

\end{document}